\newtheorem{theorem}{Theorem}[section]
\newtheorem{lemma}[theorem]{Lemma}
\theoremstyle{proposition}
\newtheorem{corollary}[theorem]{Corollary}
\newtheorem{definition}[theorem]{Definition}
\newtheorem{example}[theorem]{Example}
\theoremstyle{remark}
\newtheorem{remark}[theorem]{Remark}
\numberwithin{equation}{section}
\title[On Certain Gluing of semigroup rings and indispensable resolution of semigroup rings]
{On Certain Gluing and indispensable resolution of semigroup rings}
\author{Sanjay Kumar Singh \and Pranjal Srivastava}
\date{}
\address{\small \rm  } 
\address{\small \rm  Discipline of Mathematics, IISER Bhopal, Madhya Pradesh, India.}
\email{sanjayks@iiserb.ac.in}
\thanks{The second author thanks IISER Bhopal for the Institute post-doc fellowship IISERB/DoFA/PDF/2023/80.}
\address{\small \rm  Discipline of Mathematics, IISER Bhopal, Madhya Pradesh, India.}
\email{pranjal.srivastava194@gmail.com}
\date{}
\subjclass[2020]{Primary 13H10, 13P10, 20M25.}
\keywords{Affine Semigroups, Gr\"{o}bner bases, Associated graded rings, Betti numbers, 
	Cohen-Macaulay.}
\begin{document}
	
	\begin{abstract}
	In this paper, our aim is twofold: First, by using the technique of gluing semigroups, we
	give infinitely many families of a projective closure with the Cohen-Macaulay (Gorenstein) property. Also, we give an
	effective technique for constructing large families of $1$-dimensional Gorenstein local rings associated to monomial curves, which supports Rossi’s question, saying that every Gorenstein local ring has a non-decreasing Hilbert function.  In the second part, we study strong indispensable minimal free resolutions of semigroup rings, focusing on the operation of the join of affine semigroups, which provide class of examples supporting Charalambous and Thoma's question on the class of lattice ideal which has a strong indispensable free resolution..
	\end{abstract}

	\maketitle

	\section{Introduction}

	Let $\mathbb{N}$ denote the set of non-negative integers and $\mathbb{K}$ 
	denote a field. Let 
	$e\geq 3$ and $\mathbf{\underline n} = (n_{1}, \ldots, n_{e})$ be a sequence of 
	$e$ distinct positive integers with $\gcd(\mathbf{\underline n})=1$. Let us assume 
	that the numbers $n_{1}, \ldots, n_{e}$ generate the numerical semigroup 
	$\Gamma(n_1,\ldots, n_e) = \langle n_{1}, \ldots , n_{e} \rangle = 
	\lbrace\sum_{j=1}^{e}z_{j}n_{j}\mid z_{j}\in \mathbb{N}\rbrace$ 
	minimally, that is, if $n_i=\sum_{j=1}^{e}z_{j}n_{j}$ for some non-negative 
	integers $z_{j}$, then $z_{j}=0$ for all $j\neq i$ and $z_{i}=1$. We often write 
	$\Gamma$ in place of $\Gamma(n_1,\ldots, n_e)$, when there is no confusion regarding 
	the defining sequence $n_{1}, \ldots, n_{e}$. Let 
	$\eta: R = \mathbb{K}[x_1,\,\ldots,\, x_e]\rightarrow \mathbb{K}[t]$ be the mapping defined by 
	$\eta(x_i)=t^{n_i},\,1\leq i\leq e$. The ideal $\ker (\eta) = \frak{p}(n_1,\ldots, n_e)$ 
	(or simply $\frak{p}(\Gamma)$) is called 
	the defining ideal of $\Gamma(n_1,\ldots, n_e)$ and it defines the 
	affine monomial curve 
	$\{(u^{n_{1}},\ldots , u^{n_{e}})\in \mathbb{A}^{e}_{\mathbb{K}}\mid u\in \mathbb{K}\} =: 
	C(n_{1},\ldots,n_{e})$ (or simply $C(\Gamma)$). We write 
	$\mathbb{K}[x_{1},\ldots,x_{e}]/\frak{p}(n_1,\ldots, n_e) =: \mathbb{K}[\Gamma(n_1,\ldots, n_e )]$ 
	(or simply $\mathbb{K}[\Gamma]$), which is called the semigroup ring for 
	the semigroup $\Gamma(n_1,\ldots, n_e )$. It is known that $\frak{p}(\Gamma)$ is 
	generated by the binomials $x^{a}-x^{b}$, where $a$ and $b$ are $e$-tuples of non-negative 
	integers with $\eta(x^{a})=\eta(x^{b})$. 
	
	The theory of numerical semigroups has been developed mainly in connection with the study of curve singularities. Numerical semigroup rings are one-dimensional domains and hence Cohen-Macaulay; these are 
	the coordinate rings of affine monomial curves. A 
	lot of interesting studies have been undertaken 
	by several authors from the viewpoints of singularities, homology and also from purely 
	semigroup theoretic aspects. Our main object in this paper is to explore the gluing of numerical semigroup rings and produce projective closure of a monomial curve and tangent cone of the monomial curve with the Cohen-Macaulay and Gorenstein properties. The concept of gluing was introduced by Rosales in \cite{Rosales} and used by several authors to produce new examples of numerical semigroup with nice property such as set-theoretic and ideal-theoretic complete intersection, Cohen-Macaulay, Gorenstein etc. In this paper, we particularly study the projective closure of gluing of numerical semigroup rings and then the tangent cone of gluing of numerical semigroup rings.
		
	\smallskip
	
	\noindent \emph{Projective Closure of Numerical Semigroup rings.}
		Let $n_{e}>n_{i}$ for all $i<e$, and $n_{0}=0$. We define the 
	semigroup $\overline{\Gamma(n_{1},\ldots n_{e})}  
	= \langle \{(n_{i},n_{e}-n_{i})\mid 0\leq i\leq e\} \rangle  = \lbrace \sum_{i=0}^{e}z_{i}(n_{i},n_{e}-n_{i}) \mid 
	z_{i}\in\mathbb{N}\rbrace$, often written as $\overline{\Gamma}$. 
	Let $\eta^{h}:S=\mathbb{K}[x_{0},\ldots,x_{e}]\longrightarrow \mathbb{K}[s,t]$ 
	be the $\mathbb{K}$-algebra map defined as $\eta^{h}(x_{i})=t^{n_{i}}s^{n_{e}-n_{i}}, 0\leq i \leq e$  and 
	$ \ker(\eta^{h}) = \overline{\mathfrak{p}(n_{1},\ldots , n_{e})}$ (or simply 
	$\overline{\mathfrak{p}(\Gamma)}$). The homogenization of 
	$\frak{p}(n_1,\ldots, n_e)$ with respect to the variable $x_{0}$ is 
	$\overline{\mathfrak{p}(n_{1},\ldots n_{e})}$, which defines 
	$\{[(v^{n_{e}}: v^{n_{e}-n_{1}}u^{n_{1}}: \cdots: u^{n_{e}})]\in\mathbb{P}^{e}_{\mathbb{K}}\mid u, v\in \mathbb{K}\} =: \overline{C(n_{1},\ldots,n_{e})}$ (or simply $\overline{C(\Gamma)}$),  
	the projective closure of the affine monomial curve $C(n_{1},\ldots,n_{e})$. 
	The $\mathbb{K}$-algebra 
	$\mathbb{K}[x_{0},\ldots,x_{e}]/\overline{\mathfrak{p}(n_{1},\ldots , n_{e})} =: 
	\mathbb{K}[\overline{\Gamma(n_1,\ldots, n_e )}]$ (or simply $\mathbb{K}[\overline{\Gamma}]$) 
	denotes the coordinate ring. It can be proved that 
	$\overline{C(n_{1},\ldots,n_{e})}$ is a projective curve, which is said to be arithmetically Cohen-Macaulay if the vanishing ideal $\overline{\mathfrak{p}(n_{1},\ldots , n_{e})}$ 
	is a Cohen-Macaulay ideal. 
	 The projective closure of a numerical semigroup ring is defined by an affine semigroup in $\mathbb{N}^2$. The Cohen-Macaulay property and many other properties, such as Betti numbers, Goresntein, etc., of 
	numerical semigroup rings are not preserved under the operation of projective closure. 
	It is not easy to find examples 
	where the projective closure retains the Cohen-Macaulay property; see 
	\cite{ACM}. Gluing of affine semigroup can be an effective technique for producing projective closure with the Cohen-Macaulay property, and from gluing perspective, Saha et al. in \cite{Saha-Gluing} asked the following question.
	
	\medspace
	
	\noindent\emph{Question 1}(\cite[Question 1]{Saha-Gluing}).\label{Question} Suppose the projective 
	closures of two affine monomial curves are arithmetically Cohen-Macaulay 
	(respectively Gorenstein). Which conditions on gluing of numerical semigroups 
	ensure that the 
	arithmetically Cohen-Macaulay 
	(respectively Gorenstein) properties are preserved for the projective closure? 
	
	An answer to the above Question would help us create 
a large family of affine monomial curves with arithmetically 
Cohen-Macaulay (respectively Gorenstein) projective closure. Saha et al. in \cite{Saha-Gluing} answer this question for \emph{star gluing} and \emph{simple gluing}. In particular, they proved that the Cohen-Macaulay (respectively Gorenstein) property preserves for the projective closure of star gluing of numerical semigroup rings provided projective closures of original numerical semigroups are Cohen-Macaulay (respectively Gorenstein). Feza Arslan et al. in \cite{Arslan} introduced the concept of 
\textit{nice gluing} to	give infinitely many families of $1$-dimensional local rings with Cohen 
Macaulay tangent cone. We prove that, the projective closure of a nice gluing of numerical semigroup rings is Cohen-Macaulay (respectively Gorenstein) under some conditions.

	\medspace
	
	\noindent\emph{Associated graded ring of numerical semigroup rings.}
	For a numerical semigroup $\Gamma$, $\mathbb{K}[\Gamma]$ is the coordinate ring of the affine monomial 
	curve $C(\Gamma)$. The tangent cone of $C(\Gamma)$ is the associated graded ring  $\mathrm{gr}_{\frak{m}}(\mathbb{K}[\Gamma])=\oplus_{i=0}^{\infty}\mathfrak{m}^{i}/\mathfrak{m}^{i+1}$, 
	with respect to the maximal ideal $\frak{m}=(t^{s} : s \in \Gamma\setminus \{0\})\mathbb{K}[\Gamma]$ 
	at the origin. The algebraic properties of $\mathrm{gr}_{\frak{m}}(\mathbb{K}[\Gamma])$ is a  
	focal point of recent research, with substantial progress happening in the directions of 
	finding its defining equations, Hilbert functions, Cohen-Macaulayness and Betti numbers. 	In this section, we study the Hilbert functions of local rings associated to affine
	monomial curves obtained by using the technique of gluing numerical semigroups. 
	Arslan et al. \cite{Arslan} give large
	families of local rings with non-decreasing Hilbert functions using nice gluing, which are, in fact, special types of
	gluings. 
		
	In this article we answer the following question for \emph{star gluing}: If $C(\Gamma_1)$ and $C(\Gamma_2)$ have Cohen-Macaulay
	tangent cones, is the tangent cone of the monomial curve $C(\Gamma)$ obtained by gluing
	these two monomial curves necessarily Cohen-Macaulay? The following example
	shows that the answer is no.

	\begin{example}[\cite{Arslan}, Example 2.2]
		Let $C(\Gamma_1)$ and $C(\Gamma_2)$ be the monomial curves $C(\Gamma_1) = C(5, 12)$ and $C(\Gamma_2)$ =
		$C(7, 8)$. Obviously, they have Cohen-Macaulay tangent cones. By a gluing of $\Gamma_1$ and $\Gamma_2$, we obtain the monomial curve $C = C(21 \times 5, 21 \times 12, 17 \times 7, 17 \times 8)$. $C$ has a non Cohen-Macaulay tangent cone.
	\end{example}
	
	We demonstrate that \emph{star gluing} is also an effective technique for constructing large families of $1$-dimensional Gorenstein local rings associated to monomial curves, which
	support the question due to Rossi saying that every Gorenstein local ring has
	non-decreasing Hilbert function \cite{Arslan-2}.
	
	\medspace
	
	\emph{Extension of affine semigroups.}
	In the numerical case, the finiteness of $\mathbb{N}\setminus \Gamma$ implies that there exists at least a positive integer $a \in \mathbb{N}\setminus \Gamma$ such that $a + \Gamma \setminus \{0\} \subseteq  \Gamma  \,(\text{provided that } \Gamma \neq \mathbb{N})$. These integers are called
	pseudo-Frobenius numbers and the biggest one is the so-called Frobenius number.  But for affine semigroups in $\mathbb{N}^d$, the existence 
	of such elements is not always guaranteed. The study of pseudo-Frobenius elements in affine semigroups over $\mathbb{N}^d$ is done in \cite{MPD}, where the authors consider the complement of the 
	affine semigroup in its rational polyhedral cone. In \cite{MPD}, the authors prove 
	that an affine semigroup $\Gamma$ has pseudo-Frobenius elements if and only if the length of the 
	graded minimal free resolution of the corresponding semigroup ring is maximal. Affine 
	semigroups having pseudo-Frobenius elements are called maximal projective dimension (MPD for short) semigroups. Projective closure which are non Cohen-Macaulay are comes under the class of maximal projective dimension semigroups. 
	Let $\Gamma$ be an MPD-semigroup and let $\mathrm{Cone}(\Gamma)$ denote the rational polyhedral 
	cone of $\Gamma$. Set $\mathcal{H}(\Gamma):=(\mathrm{Cone}(\Gamma) \setminus \Gamma)\cap \mathbb{N}^d$. For a fixed term order $\prec$ on $\mathbb{N}^d$, Bhardwaj et al. in \cite{Pseudo} introduced the $\prec$-symmetric semigroup, provided $\mathrm{F}(\Gamma)_{\prec}=\rm{max}_{\prec}\mathcal{H}(\Gamma)$ exists.
	
	One of the widely studied class of numerical semigroups is symmetric semigroups. The motivation to study 
	these semigroups comes from the work E. Kunz, who proved that a one-dimensional 
	analytically irreducible Noetherian local ring is Gorenstein if and only if its value semigroup is symmetric (See \cite{Kunz}). In other words, a numerical semigroup is symmetric if and only if 
	the associated semigroup ring is Gorenstein.  In \cite{Watanabe},  Watanabe gives a special
	technique of producing symmetric numerical semigroup. Later, Stamate in \cite{Betti} called this technique to \emph{simple gluing} and used this to produce Cohen-Macaulay tangent cone of numerical semigroup rings. In \cite[Theorem 4.11]{Saha-Gluing} Saha et al. proved that the projective closure of numerical semigroup ring obtained from the simple gluing is Cohen-Macaulay (respectively Gorenstein) provided projective closure of original numerical semigroup ring is Cohen-Macaulay (respectively Gorenstein). In this paper, we study the simple gluing of MPD semigroups and prove that simple gluing preserves the $\prec$-symmetric MPD semigroups provided original MPD semigroup should be $\prec$-symmetric.
	
	\smallskip
	
	\emph{Strong Indispensable Free Resolution.}
	Indispensable binomials are those that appear in every minimal binomial generating set
	up to a constant multiple. Strongly indispensable binomials are those appearing in every minimal
	generating set, up to a constant multiple. In the same vein, as introduced for the first time by
	Charalambous and Thoma in \cite{SIFR}, strongly indispensable higher syzygies are those appearing
	in every minimal free resolution. Semigroups all of whose higher syzygy modules are generated minimally by strongly indispensable elements are said to have a strongly indispensable minimal
	free resolution, SIFRE for short. The statistical models having SIFREs or equivalently having
	uniquely generated higher syzygy modules are a subclass of those having a unique Markov basis
	and therefore have a better potential statistical behaviour. 
	Motivated by the third question stated by Charalambous and Thoma at the end of \cite{SIFR}, our main
	aim in this article is to identify some semigroups having SIFREs.
	It is difficult to construct examples having SIFREs.
	\c{S}ahin and Stella in \cite{Sahin-SIFR} produces SIFR using the gluing of affine semigroups. Saha et al. in \cite{Join}, introduced the notion of  \emph{Join of affine semigroups} which has some nice property. In Section 3, we prove that $\Gamma_1$ and $\Gamma_2$ have SIFRs if and only if their join is also have SIFR (See Theorem \ref{Join-SIFR} )
	
	\section{Gluing of monomial curves}
	
	We start this section with the example.
	
	\begin{example}{\rm 
			We consider monomial curves $\overline{C(3,5)}$ and  $\overline{C(7,12)}$. 
			They are Cohen-Macaulay (Gorenstein), but their gluing with respect to the elements 
			$p=8, q=19$ is $\overline{C(57,95,56,96)}$, which is not even arithmetically 
			Cohen-Macaulay (verified with the help of MACAULAY 2 \cite{Macaulay}).
		}
	\end{example}
	
	\noindent This motivates authors in \cite{Saha-Gluing} to ask Question 1, described in the Introduction. 
	We summarise two results which will be useful for our purpose. 
	
	\begin{lemma}[\cite{CMC}, Lemma 2.1]\label{Cri-Grob}
		Let $I$ be an ideal in $R=\mathbb{K}[x_{1},\ldots,x_{e}]$ and 
		$I^{h}\subset R[x_{0}]$ its homogenization, 
		with respect to the variable $x_{0}$. Let $<$ be any reverse lexicographic monomial order on $R$ and $<_{0}$ 
		the reverse lexicographic monomial order on $R[x_{0}]$, extended from $R$, such that $x_{i}>x_{0}$. 
		If $\{f_{1},\ldots,f_{d}\}$ is the reduced Gr\"{o}bner basis for $I$, 
		with respect to $<$, then $\{f_{1}^{h},\ldots,f_{d}^{h}\}$ is the reduced Gr\"{o}bner basis 
		for $I^{h}$, with respect to $<_{0}$, and $\mathrm{in}_{<_{0}}(I^{h})=(\mathrm{in}_{<}(I))R[x_{0}]$.
		
	\end{lemma}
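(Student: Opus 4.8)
The plan is to reduce the entire statement to a single observation about how leading terms transform under homogenization: for every $f\in R$ one has $\mathrm{in}_{<_0}(f^h)=\mathrm{in}_<(f)$, viewing the right-hand side as an ($x_0$-free) monomial inside $R[x_0]$. This is where both hypotheses on the order are used in an essential way. Since a reverse lexicographic order is degree-refining, $\mathrm{in}_<(f)$ is a monomial of top total degree $d=\deg f$, so homogenization leaves it untouched, i.e. its homogenized form is $x_0^{0}\,\mathrm{in}_<(f)$. Any other monomial $x^b$ of $f$ has $|b|\le d$ and becomes $x_0^{d-|b|}x^b$ in $f^h$. Comparing $\mathrm{in}_<(f)$ with $x_0^{d-|b|}x^b$ in $<_0$ (both of degree $d$), and using that $x_0$ is the \emph{smallest} variable, the last nonzero coordinate of the difference of exponent vectors is the $x_0$-coordinate $|b|-d<0$ whenever $|b|<d$, while for $|b|=d$ the comparison reduces exactly to the $<$-comparison on $R$. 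In either case $\mathrm{in}_<(f)$ remains largest. I expect this computation to be the main, and essentially the only, obstacle: the conditions $x_i>x_0$ and ``reverse lexicographic'' are precisely what make it work, and a degree-incompatible order or a different placement of $x_0$ would break it.

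Granting the observation, I would first establish $\mathrm{in}_{<_0}(I^h)=(\mathrm{in}_<(I))R[x_0]$ together with the Gr\"obner basis property. The inclusion $\langle \mathrm{in}_{<_0}(f_i^h)\rangle \subseteq \mathrm{in}_{<_0}(I^h)$ is immediate from $f_i^h\in I^h$. For the reverse inclusion, since $<_0$ is degree-compatible and $I^h$ is homogeneous, it suffices to control $\mathrm{in}_{<_0}(F)$ for homogeneous $F\in I^h$. Dehomogenizing, $f:=F|_{x_0=1}$ lies in $I$ (because $(I^h)|_{x_0=1}=I$) and is nonzero, and one has the standard factorization $F=x_0^{k}f^h$ with $k\ge 0$. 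Hence $\mathrm{in}_{<_0}(F)=x_0^{k}\,\mathrm{in}_{<_0}(f^h)=x_0^{k}\,\mathrm{in}_<(f)$ by the observation. As $\{f_i\}$ is a Gr\"obner basis of $I$, the monomial $\mathrm{in}_<(f)$ is divisible by some $\mathrm{in}_<(f_i)=\mathrm{in}_{<_0}(f_i^h)$, so $\mathrm{in}_{<_0}(F)\in\langle \mathrm{in}_{<_0}(f_i^h)\rangle$. This gives $\mathrm{in}_{<_0}(I^h)=\langle \mathrm{in}_{<_0}(f_i^h)\rangle=\langle \mathrm{in}_<(f_i)\rangle R[x_0]=(\mathrm{in}_<(I))R[x_0]$; and since the $f_i^h$ lie in $I^h$ and their leading terms generate $\mathrm{in}_{<_0}(I^h)$, they form a Gr\"obner basis and in particular generate $I^h$.

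Finally, reducedness of $\{f_1^h,\dots,f_d^h\}$ would transfer directly from that of $\{f_i\}$ via the observation. Each $f_i^h$ is monic, since its leading coefficient equals that of $f_i$. Every monomial of $f_i^h$ equals $x_0^{\deg f_i-|a|}x^a$ for a monomial $x^a$ of $f_i$, and because each leading term $\mathrm{in}_{<_0}(f_j^h)=\mathrm{in}_<(f_j)$ is free of $x_0$, the divisibility $\mathrm{in}_<(f_j)\mid x_0^{\deg f_i-|a|}x^a$ is equivalent to $\mathrm{in}_<(f_j)\mid x^a$. For $j\ne i$ this is excluded exactly by the reducedness of $\{f_i\}$, so no monomial of $f_i^h$ is divisible by the leading term of another $f_j^h$. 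Hence $\{f_1^h,\dots,f_d^h\}$ is the reduced Gr\"obner basis of $I^h$, which completes the argument.
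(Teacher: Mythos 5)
Your proof is correct. Note that the paper itself contains no proof of this statement: it is quoted verbatim, with attribution, from Herzog--Stamate \cite{CMC}, so there is no internal argument to compare yours against. Your route --- the key observation that $\mathrm{in}_{<_{0}}(f^{h})=\mathrm{in}_{<}(f)$ for the degree reverse lexicographic order with $x_{0}$ smallest (split into the cases $|b|<d$ and $|b|=d$), followed by the factorization $F=x_{0}^{k}f^{h}$ for homogeneous $F\in I^{h}$ with dehomogenization $f\in I$, and the transfer of reducedness via the $x_{0}$-freeness of the leading monomials --- is the standard and essentially complete proof of this classical fact, and it correctly isolates where both hypotheses (degree-compatibility of the order and $x_{i}>x_{0}$) are used.
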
 	
	
	The other one that is useful for checking the arithmetically Cohen-Macaulay property of the projective closure. It states the following: 
	
	\begin{theorem}[\cite{CMC}, Theorem 2.2]\label{Cond-CM}
		Let $\mathbf{n} = (n_{1},\ldots,n_{e})$ be a sequence of positive integers with $n_{e}> n_{i}$ for all $i<n$. Let $<$ 
	be any reverse lexicographic order on $R=\mathbb{K}[x_{1},\ldots,x_{e}]$, such that $x_{i}>x_{e}$, for all $1\leq i<e$. 
	Let $<_{0}$ be the induced reverse lexicographic order on $R[x_{0}]$, where $x_{e}>x_{0}$. Then the following 
	conditions are equivalent:
	\begin{enumerate}
		\item[(i)] The projective monomial curve $\overline{C(n_{1},\ldots,n_{e})}$ is arithmetically Cohen-Macaulay.
		\item[(ii)] $\mathrm{in}_{<_{0}}((\mathfrak{p}(n_{1},\ldots,n_{e}))^{h})$ (homogenization w.r.t. $x_{0}$) is a Cohen-Macaulay ideal.
		\item[(iii)] $\mathrm{in}_{<}(\mathfrak{p}(n_{1},\ldots,n_{e}))$ is a Cohen-Macaulay ideal.
		\item[(iv)] $x_{e}$ does not divide any element of $G(\mathrm{in}_{<}(\mathfrak{p}(n_{1},\ldots,n_{e})))$.
	\end{enumerate}

	\end{theorem}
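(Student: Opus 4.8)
The plan is to handle the three ``initial ideal'' conditions (ii)--(iv) by Gröbner-theoretic and combinatorial means, and to bridge them to the geometric statement (i) through a single reverse-lexicographic saturation principle. Throughout I will use that $R/\mathfrak{p}\cong\mathbb{K}[\Gamma]$ is a one-dimensional domain, so that $\dim R/\mathrm{in}_<(\mathfrak{p})=\dim R/\mathfrak{p}=1$ and $\dim S/\mathfrak{p}^h=2$, and that since $\mathfrak{p}^h$ is the homogenization with respect to $x_0$ it is saturated with respect to $x_0$; hence $x_0$ is a nonzerodivisor on $S/\mathfrak{p}^h$.

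First I would settle (ii)$\Leftrightarrow$(iii). By Lemma \ref{Cri-Grob} the reduced Gröbner basis of $\mathfrak{p}^h$ is the homogenization of that of $\mathfrak{p}$, so $\mathrm{in}_{<_0}(\mathfrak{p}^h)=\mathrm{in}_<(\mathfrak{p})\,S$; in particular this initial ideal involves no $x_0$. Therefore $S/\mathrm{in}_{<_0}(\mathfrak{p}^h)\cong\bigl(R/\mathrm{in}_<(\mathfrak{p})\bigr)[x_0]$, and a polynomial extension is Cohen--Macaulay exactly when the base ring is, which gives (ii)$\Leftrightarrow$(iii). The implication (ii)$\Rightarrow$(i) is then the standard upper-semicontinuity of depth under Gröbner degeneration: $S/\mathfrak{p}^h$ and $S/\mathrm{in}_{<_0}(\mathfrak{p}^h)$ share the same Hilbert function and Krull dimension, and $\operatorname{depth}(S/\mathfrak{p}^h)\ge\operatorname{depth}(S/\mathrm{in}_{<_0}(\mathfrak{p}^h))$, so Cohen--Macaulayness of the initial ideal forces it for $\mathfrak{p}^h$, i.e.\ arithmetic Cohen--Macaulayness of $\overline{C(\mathbf{n})}$.

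For the equivalence (iii)$\Leftrightarrow$(iv) I would invoke the Bayer--Stillman saturation property of the reverse-lexicographic order whose smallest variable is $x_e$: for the relevant homogeneous ideal one has $\mathrm{in}_<(\mathfrak{p}):x_e^\infty=\mathrm{in}_<(\mathfrak{p}):\mathfrak{m}^\infty$, so that saturating $\mathrm{in}_<(\mathfrak{p})$ against the irrelevant ideal agrees with saturating it against the single smallest variable. Granting this, since $R/\mathrm{in}_<(\mathfrak{p})$ is one-dimensional it is Cohen--Macaulay iff $\operatorname{depth}\ge 1$, iff $\mathfrak{m}\notin\mathrm{Ass}$, iff $\mathrm{in}_<(\mathfrak{p})$ is $\mathfrak{m}$-saturated, hence iff $\mathrm{in}_<(\mathfrak{p}):x_e^\infty=\mathrm{in}_<(\mathfrak{p})$, that is, iff $x_e$ is a nonzerodivisor on $R/\mathrm{in}_<(\mathfrak{p})$. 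Finally, for a monomial ideal $x_e$ is a nonzerodivisor precisely when $x_e$ divides no element of $G(\mathrm{in}_<(\mathfrak{p}))$: if $x_e\mid u$ minimally then $u/x_e\notin\mathrm{in}_<(\mathfrak{p})$ while $x_e\cdot(u/x_e)\in\mathrm{in}_<(\mathfrak{p})$, and conversely if $x_e$ divides no minimal generator then $x_e m\in\mathrm{in}_<(\mathfrak{p})$ forces $m\in\mathrm{in}_<(\mathfrak{p})$. This is exactly (iv). For the remaining implication (i)$\Rightarrow$(ii) I would run the same saturation principle in $S$ with smallest variable $x_0$: arithmetic Cohen--Macaulayness makes $x_0$ part of a maximal regular sequence, and combined with $\mathrm{in}_{<_0}(\mathfrak{p}^h)=\mathrm{in}_<(\mathfrak{p})S$ being independent of $x_0$, this transfers the depth of $S/\mathfrak{p}^h$ to $S/\mathrm{in}_{<_0}(\mathfrak{p}^h)$, closing the cycle.

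I expect the main obstacle to be justifying the reverse-lexicographic saturation identity in the present setting, where $\mathfrak{p}$ is homogeneous only for the nonstandard $\mathbf{n}$-grading rather than the standard grading in which Bayer--Stillman is usually phrased. Concretely, I would first pass to the standard-graded ideal $\mathfrak{p}^h$ in $S$, apply the saturation property there with $x_0$ smallest, and then transport the conclusion to $R$ via Lemma \ref{Cri-Grob} together with the fact that $x_0$ is a nonzerodivisor. The delicate point is that the \emph{degree-dropping} binomials $x^a-x^b\in\mathfrak{p}$ with $\deg a>\deg b$ are precisely the source of minimal generators of $\mathrm{in}_<(\mathfrak{p})$ that are divisible by $x_e$, and one must verify that these, and only these, obstruct saturation; checking this correspondence, rather than any of the homological bookkeeping, is where the real work lies.
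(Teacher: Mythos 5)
This theorem is not proved in the paper at all --- it is quoted verbatim from Herzog--Stamate \cite{CMC} --- so your attempt can only be measured against the standard argument there, and against that standard your outline has one step that genuinely fails. The implication (i) $\Rightarrow$ (ii) cannot be obtained by ``running the saturation principle in $S$ with smallest variable $x_0$.'' Every statement you can extract from $x_0$ alone is vacuously true and carries no information: by Lemma \ref{Cri-Grob}, $\mathrm{in}_{<_0}(\mathfrak{p}^h)=\mathrm{in}_<(\mathfrak{p})S$ involves no $x_0$ whatsoever, so $x_0$ is \emph{automatically} a nonzerodivisor on $S/\mathrm{in}_{<_0}(\mathfrak{p}^h)$ and $\mathrm{in}_{<_0}(\mathfrak{p}^h)$ is \emph{automatically} $x_0$-saturated, whether or not (ii) holds. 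Since $\dim S/\mathfrak{p}^h=2$, condition (ii) is a depth-\emph{two} assertion, and no argument that engages only the single variable $x_0$ can reach it. What is actually needed is the pair $x_0,x_e$: arithmetic Cohen--Macaulayness makes $x_0,x_e$ a regular sequence on $S/\mathfrak{p}^h$ (they form a system of parameters because $x_i^{n_e}-x_e^{n_i}x_0^{n_e-n_i}\in\mathfrak{p}^h$, and in a CM ring every s.o.p.\ is a regular sequence); then the revlex colon property $\mathrm{in}(J:y)=\mathrm{in}(J):y$ (Eisenbud 15.12 / Bayer--Stillman, legitimately applicable since these ideals are standard-graded) applied first to $x_0$ in $S$ and then to $x_e$ in $R\cong S/(x_0)$ for the ideal $(\mathfrak{p}^h+(x_0))/(x_0)$ transfers this regular sequence to $S/\mathrm{in}_{<_0}(\mathfrak{p}^h)$. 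Your proposal never brings $x_e$ into this implication, so the cycle does not close.

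A second, related gap: the identity $\mathrm{in}_<(\mathfrak{p}):x_e^\infty=\mathrm{in}_<(\mathfrak{p}):\mathfrak{m}^\infty$ that you invoke for (iii) $\Leftrightarrow$ (iv) is \emph{not} a formal property of reverse-lexicographic orders. Bayer--Stillman commutes $\mathrm{in}$ with colon or saturation by the last variable; it does not identify saturation by the last variable with saturation by $\mathfrak{m}$ --- that requires $x_e$ to lie outside every associated prime of dimension one. Here the identity is true, but for a geometric reason you never verify, and it is exactly the point where the hypothesis $n_e>n_i$ for all $i<e$ enters the proof: for each $i<e$ the binomial $x_i^{n_e}-x_e^{n_i}$ lies in $\mathfrak{p}$, and since the order refines total degree and $n_e>n_i$, its leading monomial is the pure power $x_i^{n_e}$. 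Hence $x_i^{n_e}\in\mathrm{in}_<(\mathfrak{p})$ for all $i<e$, so $x_e$ is a parameter for the one-dimensional ring $R/\mathrm{in}_<(\mathfrak{p})$ and no minimal prime can contain it; only then does ``CM $\Rightarrow$ $x_e$ is a nonzerodivisor'' follow. Without this, (iii) $\Rightarrow$ (iv) fails: Cohen--Macaulayness only excludes $\mathfrak{m}$ from $\mathrm{Ass}$, and $x_e$ could still be a zerodivisor by lying in a minimal prime. Since the hypothesis $n_e>n_i$ appears nowhere in your write-up, the argument cannot be complete as it stands. The remaining pieces --- (ii) $\Leftrightarrow$ (iii) via Lemma \ref{Cri-Grob} and invariance of CM-ness under polynomial extension, and (ii) $\Rightarrow$ (i) by upper semicontinuity of depth under Gr\"obner degeneration --- are correct and coincide with the route taken in \cite{CMC}.
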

	
	\smallskip

	\begin{definition}[\cite{Rosales}] 
		{\rm 
			Let $\Gamma_{1} = \Gamma(m_{1},\dots, m_{l})$ and 
			$\Gamma_{2} = \Gamma(n_{1},\dots, n_{k})$ be two numerical semigroups, 
			with $m_{1} < \cdots < m_{l}$ and $n_{1} < \cdots < n_{k}$. 
			Let $p =b_{1}m_{1} +\cdots +b_{l}m_{l} \in \Gamma_{1}$ and 
			$q = a_{1}n_{1} +\cdots +a_{k}n_{k} \in\Gamma_{2}$ be two positive 
			integers satisfying $\gcd(p, q) = 1 $, with 
			$p \notin \{m_{1}, \dots ,m_{l} \}$, $q \notin \{n_{1}, \ldots ,n_{k}\}$ 
			and $\{qm_{1},\ldots , qm_{l}\}\cap \{pn_{1},\ldots , pn_{k}\} = \emptyset$. 
			The numerical semigroup $\Gamma_{1}\#_{p,q} \Gamma_{2}=\langle qm_{1},\ldots , qm_{l}, pn_{1},\ldots, pn_{k}\rangle$ is called a \textit{gluing} of the semigroups $\Gamma_{1}$ and $\Gamma_{2}$ with 
			respect to $p$ and $q$.
		}
	\end{definition}
	
	\begin{remark}[\cite{SMR}, Lemma 2.2] 
		If $\Gamma$ be obtained by gluing 
	$\Gamma_{1} = \Gamma(m_{1},\dots, m_{l})$ and 
	$\Gamma_{2} = \Gamma(n_{1},\dots, n_{k})$ 
	with respect to $p = \sum_{i=1}^{l}b_{i}m_{i}$ and $q = \sum_{i=1}^{k}a_{i}n_{i}$,  
	and if the defining ideals $\mathfrak{p}(\Gamma_{1}) \subset \mathbb{K}[x_{1},\ldots, x_{l}]$ and 
	$\mathfrak{p}(\Gamma_{2}) \subset \mathbb{K}[y_{1},\ldots, y_{k}]$ are generated by the sets 
	$G_{1} = \{f_{1},\ldots, f_{d}\} $ and $G_{2} = \{g_{1},\ldots, g_{r}\}$ respectively, 
	then the defining ideal 
	$\frak{p}(\Gamma)\subset R = \mathbb{K}[x_{1},\ldots, x_{l}, y_{1},\ldots, y_{k}]$ is generated by the set 
	$G = G_{1}\cup G_{2}\cup \{\rho\}$, where 
	$\rho=x_{1}^{b_{1}}\dots x_{l}^{b_{l}}-y_{1}^{a_{1}}\dots y_{k}^{a_{k}}$.
	
	\end{remark}

	\begin{definition}[\cite{Arslan}, Definiton 2.3]\label{Nice-Gluing}{\rm 
			The numerical semigroup $\Gamma_{1}\#_{p,q} \Gamma_{2}$,  
			obtained by gluing of  $\Gamma_{1} = \Gamma(m_{1},\dots, m_{l})$ and $\Gamma_{2} = \Gamma(n_{1},\dots, n_{k})$, with respect to the positive integers $p$ and $q$, is 
			called a \textit{nice gluing} if $p = b_{1}m_{1}+\dots+b_l m_l \in \Gamma_{1}$ and $q = a_{1}n_{1}\in \Gamma_{2}$, 
			with $ b_{1}+b_{2}+\dots+b_{l} \geq a_{1}$.  If we take $q=a_1n_1+\dots+a_k n_k$ with $ b_{1}+b_{2}+\dots+b_{l} > a_{1}+\dots+a_k$. Then the numerical semigroup minimally generated by $\langle qm_1,\dots,qm_l,pn_1,\dots,pn_k\rangle $ is called generalized nice gluing of $\Gamma_1$ and $\Gamma_2$ with respect to $p,q$. We use the notation 
			$\Gamma_{1}\#^n_{p,q} \Gamma_{2}$ to denote the generalized nice gluing 
			of $\Gamma_{1}$ and $\Gamma_{2}$, with respect to the positive integers 
			$p$ and $q$.
			
		}
	\end{definition}

		\begin{remark} For the Cohen-Macaulayness of projective closure of the numerical semigroup $\Gamma=\Gamma_{1}\#^n_{p,q} \Gamma_{2}$, obtained by nice gluing of $\Gamma_{1}$ and $\Gamma_{2}$ with respect to $p,q$, it is important to find the largest integer from the generator 
			of the numerical semigroup $\Gamma_{1}\#^n_{p,q} \Gamma_{2}$. Choices for the largest integer $\Gamma$ is either $qm_l$ or $pn_k$. If the largest integer is $qm_l$ then $\overline{C(\Gamma)}$ need not be Cohen-Macaulay (See Example \ref{Counter}).
		\end{remark}
		
		\begin{example}\label{Counter}{\rm
			Let $\Gamma_1=\langle 5,7,11 \rangle$ and $\Gamma_2=\langle 25,28 \rangle$. Let $p=2\cdot 5+7=17,\, q=2\cdot 25=50$. Then $\Gamma=\Gamma_{1}\#^n_{17,50} \Gamma_{2}=\langle 250,350,550,425,476 \rangle$ and $\overline{C(\Gamma)}$ is not arithmetically Cohen-Macaulay (verified with the help of MACAULAY 2 \cite{Macaulay}).
		}
		\end{example}
		
		\noindent \textbf{Condition A}: Let $\mathrm{lcm}(b_i,\alpha_i)\neq b_i$  for all $1 \leq i \leq l$, where  $x_{1}^{\alpha_1}\dots x_{l}^{\alpha_l}=\mathrm{LM}(f)$ and $f $ is an arbitrary element of a reduced Gr\"{o}bner basis of $\mathfrak{p}(\Gamma_1)$ and $b_i$ is according to Definition \ref{Nice-Gluing}.   We deonte $\mathrm{lcm}(m,n)$ by $[m,n]$.
		This condition has been defined in such a way that $S$-polynomial of $\rho$ and $f$ reduces to zero after a appropriate multidivison by $G$ (defined in Lemma \ref{GG}).
 		
		\begin{lemma}\label{GG}
			Let $\Gamma_{1} = \Gamma(m_{1},\dots, m_{l})$ and $\Gamma_{2} = \Gamma(n_{1},\dots, n_{k})$ be two numerical semigroups with $m_{1} < \cdots < m_{l}$ and $n_{1} < \cdots < n_{k}$.
			Let 
			$p = b_{1}m_{1}+\dots+p_l m_l \in \Gamma_{1}$ and 
			$q = a_{1}n_{1} +\cdots +a_{k}n_{k} \in\Gamma_{2}$ be 
			two positive integers satisfying 
			$\gcd(p, q) = 1 $, such that $\Gamma=\Gamma_{1}\#^n_{p,q} \Gamma_{2}$ is the generalized nice gluing of $\Gamma_{1}$ 
			and $\Gamma_{2}$ with $b_1,\dots,b_l$ satisfy Condition $A$. Suppose that $G_{1}$ and $G_{2}$ 
			are Gr\"{o}bner bases of $\frak{p}(\Gamma_{1})$ and 
			$\frak{p}(\Gamma_{2})$, with respect to the degree 
			reverse lexicographic ordering induced by 
			$x_{1}>\dots>x_{l}$ and $y_{1}>\dots>y_{k}$ 
			respectively. Then 
			$$G_{1}^{h} \cup G_{2}^{h} \cup \{x^{b_{1}}_{1}\dots x_{l}^{b_l}-x_{0}^{b_{1}\dots + b_l-(a_{1}+\cdots+a_{k})}y^{a_{1}}_{1}y^{a_{2}}_{2}\dots y^{a_{k}}_{k}\}$$ is a Gr\"{o}bner bases of 
			$\overline{\frak{p}(\Gamma)}$  with respect to the degree 
			reverse lexicographic ordering induced by
			$x_{1} > \dots>x_{l} > y_{1} > \dots > y_{k} > x_{0}$ or $y_{1} > \dots>y_{k} > x_{1} > \dots > x_{l} > x_{0}$, where $G_{1}^{h}$, $G_{2}^{h}$ denotes  
			homogenization of $G_1$ and $G_2$ with respect to $x_{0}$. 
		\end{lemma}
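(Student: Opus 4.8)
The starting point is the description of the generators of $\mathfrak{p}(\Gamma)$ coming from the gluing structure (the Remark quoted from \cite{SMR}): in $R=\mathbb{K}[x_1,\dots,x_l,y_1,\dots,y_k]$ we have $\mathfrak{p}(\Gamma)=\langle G_1\cup G_2\cup\{\rho\}\rangle$, where $\rho=\mathbf{x}^{b}-\mathbf{y}^{a}$ with $\mathbf{x}^{b}=x_1^{b_1}\cdots x_l^{b_l}$ and $\mathbf{y}^{a}=y_1^{a_1}\cdots y_k^{a_k}$. The plan is to first prove that $G_1\cup G_2\cup\{\rho\}$ is a Gr\"obner basis of $\mathfrak{p}(\Gamma)$ for the degree reverse lexicographic order induced by the stated variable ordering (restricted to $R$, i.e.\ before homogenization), and then to transport this to $\overline{\mathfrak{p}(\Gamma)}=(\mathfrak{p}(\Gamma))^{h}$ via Lemma~\ref{Cri-Grob}. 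Since $\Gamma=\Gamma_1\#^n_{p,q}\Gamma_2$ is a generalized nice gluing, Definition~\ref{Nice-Gluing} gives $b_1+\dots+b_l>a_1+\dots+a_k$, so $\deg(\mathbf{x}^{b})>\deg(\mathbf{y}^{a})$ and hence $\mathrm{LM}(\rho)=\mathbf{x}^{b}$ in either admissible order; this degree gap is exactly what will make $x_0$-homogenization well behaved.

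To verify the Gr\"obner property I would apply Buchberger's criterion to $G_1\cup G_2\cup\{\rho\}$. The $S$-polynomials among elements of $G_1$ (resp.\ $G_2$) reduce to zero because $G_1$ (resp.\ $G_2$) is already a Gr\"obner basis and the $y$-variables (resp.\ $x$-variables) are inert. For a pair $f\in G_1$, $g\in G_2$, the leading monomials $\mathrm{LM}(f)$ and $\mathrm{LM}(g)$ are a pure $x$-monomial and a pure $y$-monomial respectively, hence coprime, so Buchberger's first criterion disposes of them; the identical argument handles every pair $(g,\rho)$ with $g\in G_2$, since $\mathrm{LM}(\rho)=\mathbf{x}^{b}$ is coprime to the pure $y$-monomial $\mathrm{LM}(g)$.

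The only genuinely nontrivial $S$-polynomials are those of the form $S(f,\rho)$ with $f=\mathbf{x}^{\alpha}-\mathbf{x}^{\alpha'}\in G_1$ and $\mathrm{LM}(f)=\mathbf{x}^{\alpha}$, and this is precisely the situation Condition~A is designed for. Writing $L=\mathrm{lcm}(\mathbf{x}^{\alpha},\mathbf{x}^{b})$, a direct computation gives $S(f,\rho)=\tfrac{L}{\mathbf{x}^{b}}\mathbf{y}^{a}-\tfrac{L}{\mathbf{x}^{\alpha}}\mathbf{x}^{\alpha'}$, whose two terms have the disjoint $x$-parts $\prod_i x_i^{(\alpha_i-b_i)_+}$ and $\prod_i x_i^{(b_i-\alpha_i)_+}$. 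Condition~A, namely $\mathrm{lcm}(b_i,\alpha_i)\neq b_i$ for all $i$, is what ensures that the leading monomial of this $S$-polynomial is divisible by a leading monomial occurring in $G=G_1\cup G_2\cup\{\rho\}$, so that a reduction step can be initiated; one then checks that the ensuing multidivision by $G$ terminates at $0$. The structural reason the reduction closes up is that both monomials of $S(f,\rho)$ represent the same element of $\Gamma$ (they are the image of a syzygy), while $G_1$ and $G_2$ compute normal forms separately in the $x$- and $y$-blocks and $\rho$ is the unique bridge between the two blocks furnished by the gluing. I expect this step — showing that \emph{every} $S(f,\rho)$ reduces to zero, with Condition~A supplying the needed divisibility — to be the main obstacle; the remaining cases are formal.

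Once $G_1\cup G_2\cup\{\rho\}$ is known to be a Gr\"obner basis of $\mathfrak{p}(\Gamma)$ for the chosen degree reverse lexicographic order, I invoke Lemma~\ref{Cri-Grob}: the order is reverse lexicographic and $x_0$ is the smallest variable in both admissible orders, so homogenization with respect to $x_0$ sends a Gr\"obner basis of $\mathfrak{p}(\Gamma)$ to a Gr\"obner basis of $(\mathfrak{p}(\Gamma))^{h}=\overline{\mathfrak{p}(\Gamma)}$, with leading monomials unchanged. Because $\mathrm{LM}(\rho)=\mathbf{x}^{b}$ has degree $b_1+\dots+b_l>a_1+\dots+a_k$, its homogenization is exactly $x_1^{b_1}\cdots x_l^{b_l}-x_0^{\,b_1+\dots+b_l-(a_1+\dots+a_k)}\,y_1^{a_1}\cdots y_k^{a_k}$, matching the displayed element, while $G_1^{h},G_2^{h}$ are the homogenizations of $G_1,G_2$; this gives the asserted Gr\"obner basis. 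Finally, the two variable orders are treated by one and the same argument: the relative position of the $x$-block against the $y$-block never affects $\mathrm{LM}(\rho)$ nor any of the coprimality reductions, and only the common internal order of the $x_i$ enters the analysis of $S(f,\rho)$.
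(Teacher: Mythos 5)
Your overall skeleton coincides with the paper's: generate $\mathfrak{p}(\Gamma)$ by $G_1\cup G_2\cup\{\rho\}$ via the gluing remark of \cite{SMR}, run Buchberger's criterion, kill the pairs inside $G_1$, inside $G_2$, the mixed pairs $(f,g)$ and the pairs $(g,\rho)$ with $g\in G_2$ by coprimality of leading monomials, observe that $\mathrm{LM}(\rho)=x_1^{b_1}\cdots x_l^{b_l}$ because $b_1+\cdots+b_l>a_1+\cdots+a_k$, and finally pass to $\overline{\mathfrak{p}(\Gamma)}$ by Lemma~\ref{Cri-Grob}. All of that matches the paper and is fine.

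The genuine gap is exactly where you flag it yourself: you never show $S(f,\rho)\rightarrow_{G}0$, and the two substitutes you offer do not close it. The structural reason you give --- that both monomials of $S(f,\rho)$ represent the same element of $\Gamma$, being the image of a syzygy --- is circular: it only says $S(f,\rho)\in\mathfrak{p}(\Gamma)$, which holds automatically for every $S$-polynomial of elements of an ideal, and ideal membership implies reduction to zero by $G$ only once $G$ is already known to be a Gr\"obner basis, which is precisely what is being proved. Likewise, your reading of Condition A ("the leading monomial of the $S$-polynomial is divisible by some leading monomial in $G$, so a reduction step can be initiated") is strictly weaker than what is needed: being able to start a division says nothing about the remainder being zero. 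The paper does more at this exact point: it writes out $S(\rho,f_i)$ explicitly in terms of the exponents $[b_s,\alpha_s]$, determines which of its two terms is the leading monomial via the degree inequality $\sum_{s}([b_s,\alpha_s]-b_s)+a_1+\cdots+a_k\geq \sum_{s}([b_s,\alpha_s]-\alpha_s)+\beta_1+\cdots+\beta_k$ (this is where the generalized-nice-gluing hypothesis and the degree-revlex choice actually enter --- a step entirely absent from your argument), and only then invokes the multivariate division algorithm together with Condition A to conclude the remainder vanishes. To repair your proposal you would need to (i) identify $\mathrm{LM}(S(f,\rho))$ from the degree data, and (ii) exhibit the division to zero, i.e.\ a standard representation of $S(f,\rho)$ with respect to $G$, rather than appeal to the syzygy heuristic.
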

		
		\proof Let $G_{1} = \{f_{1},\ldots, f_{d}\}$ be a reduced Gr\"{o}bner basis of the ideal $\mathfrak{p}(\Gamma_{1}) \subset \mathbb{K}[x_{1},\ldots, x_{l}]$, 
		with respect to the degree
		reverse lexicographical ordering induced by 
		$x_{1}> \cdots> x_{l}$ and $ G_{2} = \{g_{1}, \ldots , g_{r}\}$ be
		a reduced Gr\"{o}bner basis of the ideal 
		$\mathfrak{p}(\Gamma_{2}) \subset \mathbb{K}[y_{1}, \ldots , y_{k}]$, with respect to the degree
		reverse lexicographical ordering induced by $y_{1}>\ldots > y_{k}$.
		We show that $G=\{f_{1}, \dots, f_{d}, g_{1}, \dots , g_{r}, 
		\rho=x_{1}^{b_{1}}\dots x_{l}^{b_l}-y^{a_{1}}_{1}y^{a_{2}}_{2} \dots y^{a_{k}}_{k} \}$ 
		is a Gr\"{o}bner basis  of $\frak{p}(\Gamma)$, with respect to the degree
		reverse lexicographical ordering induced by $x_{1} >\cdots>x_{l}>y_{1}>\cdots>y_{k}$. 
		By Lemma 2.2 in \cite{SMR}, 
		the defining ideal $\mathfrak{p}(\Gamma)$ of the affine curve $C(\Gamma)$, obtained by gluing, 
		is generated by the set $G$. 
		Let $S(f, g)$ denote the $S$-polynomial of polynomials $f$ and $g$ in 
		$\mathbb{K}[x_{1},\ldots,x_{l},y_{1},\ldots,y_{k}]$. 
		With respect to the said monomial order, the leading monomial of any element of $G_{1}$ is the product of 
		monomials of the form $x_{i}^{\alpha_{i}}$ only, 
		and $G_{2}$ is the product of monomials of the form $y_{j}^{\beta_{j}}$ only, for some 
		non-negative integers $\alpha_{i}$, $\beta_{j}$ and $1\leq i \leq l$, $1\leq j \leq k$. 
		Therefore $S(f_{i},g_{j}) \rightarrow_{G} 0$. It is clear from the condition of generalized nice gluing that 
		$\mathrm{LM}(x_{1}^{b_{1}}\dots x_{l}^{b_{l}}-y^{a_{1}}_{1}y^{a_{2}}_{2}\dots y^{a_{k}}_{k})=x_{1}^{b_{1}}\dots x_{l}^{b_{l}}$. 
		Let  $f_i=x_1^{\alpha_1}\dots x_{l}^{\alpha_{l}}-x_1^{\beta_1}\dots x_{l}^{\beta_{l}}$ be an arbitrary element of $G_1$ with $\mathrm{LM}(f_i)=x_1^{\alpha_1}\dots x_{l}^{\alpha_{l}}$.
		Now $S(\rho,f_i)=-x_1^{[b_1,\alpha_1]-b_1}\dots x_{l}^{[b_{l},\alpha_{l}]-b_{l}}(y_1^{a_1}\dots y_{k}^{a_k})+x_1^{[b_1,\alpha_1]-\alpha_1}\dots x_{l}^{[b_{l},\alpha_{l}]-\alpha_{l}}x_1^{\beta_1}\dots x_l^{\beta_{l}}$. Since $\sum_{s=1}^{l}([b_s,\alpha_s]-b_s)+a_1+\dots +a_k  \geq 	\sum_{s=1}^{l}([b_s,\alpha_s]-\alpha_s)+\beta_1+\dots +\beta_k$, we have $\mathrm{LM}(S(\rho,f_i))=	x_1^{[b_1,\alpha_1]-b_1}\dots x_{l}^{[b_{l},\alpha_{l}]-b_l}(y_1^{a_1}\dots y_{k}^{a_k})$	
		and using multi variable division algorithm $S(\rho,f_i)$ reduced to zero after division by $G_1 \cup G_2 \cup \{\rho\}$ due to condition A.
		Since for any $g_i \in G_2$, $\mathrm{gcd}(\mathrm{LM}(\rho)=x_{1}^{b_{1}}\dots x_{l}^{b_{l}},\mathrm{LM}(g_i)= \text{some monomial in } y_j's)=1$,  we have $S(\rho,g_i)$ reduces to zero after division by $G_1 \cup G_2 \cup \{\rho\}$.
		By the Buchberger's criterion, $G=G_1 \cup G_2 \cup \{\rho\}$ is a Gr\"{o}bner  basis 
		of $\frak{p}(\Gamma)$ and by Lemma \ref{Cri-Grob} , $G^{h}=G_{1}^{h} \cup G_{2}^{h} \cup \{x^{b_{l}}_{l}-x_{0}^{b_{l}-(a_{1}+\cdots+a_{k})}y^{a_{1}}_{1}y^{a_{2}}_{2}\dots y^{a_{k}}_{k}\}$ 
		is a Gr\"{o}bner basis of $\overline{\mathfrak{p}(\Gamma)}$. \qed
		
		\medspace
		
		\begin{theorem}\label{GT} Assuming the hypothesis of Theorem \ref{GG}, if the associated projective closure $\overline{C(\Gamma_{1})}$ and $\overline{C(\Gamma_{2})}$ are arithmetically Cohen-Macaulay, then 
			\begin{itemize}
				\item If $pn_k$ is the largest integer among the generators of $\Gamma=\Gamma_{1}\#^n_{p,q} \Gamma_{2}$ then the projective closure $\overline{C(\Gamma)}$ associated to $\Gamma$ is arithmetically Cohen-Macaulay.
				
				\item If $qm_l$ is the largest integer among the generators of $\Gamma=\Gamma_{1}\#^n_{p,q} \Gamma_{2}$ then the projective closure $\overline{C(\Gamma)}$ associated to $\Gamma$ is never arithmetically Cohen-Macaulay.
			\end{itemize}
		\end{theorem}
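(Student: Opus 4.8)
The plan is to read off arithmetic Cohen-Macaulayness directly from the initial ideal, using the Gr\"obner basis supplied by Lemma~\ref{GG} together with the reverse-lexicographic criterion of Theorem~\ref{Cond-CM}(iv). By Lemma~\ref{GG}, a Gr\"obner basis of $\overline{\mathfrak{p}(\Gamma)}$ is $G_1^h\cup G_2^h\cup\{\rho^h\}$, and by Lemma~\ref{Cri-Grob} the initial ideal of $\overline{\mathfrak{p}(\Gamma)}$ is simply $\mathrm{in}_<(\mathfrak{p}(\Gamma))$ extended by $x_0$. Consequently the minimal monomial generators of the initial ideal are found among the three families $\{\mathrm{LM}(f_i)\}$, $\{\mathrm{LM}(g_j)\}$, and the single monomial $\mathrm{LM}(\rho)=x_1^{b_1}\cdots x_l^{b_l}$: the first family consists of pure $x$-monomials, the second of pure $y$-monomials, and the last is again pure in the $x$-variables. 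Everything then comes down to deciding, in each of the two cases, whether the variable attached to the largest generator of $\Gamma$ divides one of these monomials.

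Suppose first that $pn_k$ is the largest generator, so $y_k$ is the smallest variable and we work with the order $x_1>\cdots>x_l>y_1>\cdots>y_k$ of Lemma~\ref{GG}. I would apply Theorem~\ref{Cond-CM}(iv) to $\Gamma$: the projective closure is arithmetically Cohen-Macaulay exactly when $y_k$ divides no minimal generator of the initial ideal. Since $\mathrm{LM}(f_i)$ and $\mathrm{LM}(\rho)$ are pure in the $x$-variables, $y_k$ cannot divide them. For the $\mathrm{LM}(g_j)$ I invoke the hypothesis that $\overline{C(\Gamma_2)}$ is arithmetically Cohen-Macaulay: as $n_k$ is the largest generator of $\Gamma_2$, Theorem~\ref{Cond-CM}(iv) applied to $\Gamma_2$ says precisely that $y_k$ divides none of the $\mathrm{LM}(g_j)$. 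Hence $y_k$ divides no minimal generator, and $\overline{C(\Gamma)}$ is arithmetically Cohen-Macaulay.

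Suppose instead that $qm_l$ is the largest generator, so $x_l$ is the smallest variable and we use the order $y_1>\cdots>y_k>x_1>\cdots>x_l$. Now I want to produce a minimal generator of the initial ideal that is divisible by $x_l$; by Theorem~\ref{Cond-CM}(iv) this will rule out arithmetic Cohen-Macaulayness. The candidate is $\mathrm{LM}(\rho)=x_1^{b_1}\cdots x_l^{b_l}$. Because $\overline{C(\Gamma_1)}$ is arithmetically Cohen-Macaulay and $m_l$ is the largest generator of $\Gamma_1$, Theorem~\ref{Cond-CM}(iv) forces $\alpha_l=0$ for every $\mathrm{LM}(f)=x_1^{\alpha_1}\cdots x_l^{\alpha_l}$ in the reduced Gr\"obner basis $G_1$. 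Evaluated at the index $i=l$, Condition~A then reads $\mathrm{lcm}(b_l,0)\neq b_l$, which forces $b_l>0$, so $x_l$ genuinely divides $\mathrm{LM}(\rho)$. Finally $\mathrm{LM}(\rho)$ is a minimal generator: the pure $y$-monomials $\mathrm{LM}(g_j)$ cannot divide the pure $x$-monomial $x_1^{b_1}\cdots x_l^{b_l}$, while Condition~A is exactly what keeps any $\mathrm{LM}(f_i)$ from dividing it. Thus $x_l$ divides the minimal generator $\mathrm{LM}(\rho)$, and $\overline{C(\Gamma)}$ is never arithmetically Cohen-Macaulay.

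The delicate point, and the only real obstacle, is the bookkeeping around Condition~A in the second case. Two things must be checked with care: that the vanishing $\alpha_l=0$ coming from the Cohen-Macaulayness of $\Gamma_1$ combines with Condition~A to yield $b_l>0$, and that Condition~A indeed keeps $\mathrm{LM}(\rho)$ undivided by every $\mathrm{LM}(f_i)$, so that $\mathrm{LM}(\rho)$ persists as a minimal generator of $\mathrm{in}_<(\mathfrak{p}(\Gamma))$ rather than being absorbed by a relation inherited from $\Gamma_1$. Once this minimality is secured, both bullet points follow immediately from the equivalence (i)$\Leftrightarrow$(iv) of Theorem~\ref{Cond-CM}.
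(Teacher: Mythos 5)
Your overall route is the same as the paper's: both arguments feed the Gr\"obner basis produced by Lemma \ref{GG} into criterion (iv) of Theorem \ref{Cond-CM}, and your first bullet is exactly the paper's proof ($\mathrm{LM}(f_i)$ and $\mathrm{LM}(\rho)$ are pure $x$-monomials, and the arithmetic Cohen--Macaulayness of $\overline{C(\Gamma_2)}$ keeps $y_k$ out of the $\mathrm{LM}(g_j)$; since the minimal generators of the initial ideal sit inside this generating set, (iv) applies). In the second bullet you are actually more careful than the paper, whose entire argument is the assertion that $x_l$ divides $\mathrm{LM}(\rho)$: you correctly isolate the two points this glosses over, namely why $b_l>0$ at all, and why $\mathrm{LM}(\rho)$ survives in the \emph{minimal} generating set $G(\mathrm{in}_<(\mathfrak{p}(\Gamma)))$, which is what criterion (iv) actually tests. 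Your derivation of $b_l>0$ from $\alpha_l=0$ (Cohen--Macaulayness of $\overline{C(\Gamma_1)}$) together with Condition A at $i=l$, under the convention $\mathrm{lcm}(n,0)=0$, is legitimate and goes beyond what the paper writes.

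The gap is in your last step: the claim that ``Condition A is exactly what keeps any $\mathrm{LM}(f_i)$ from dividing $\mathrm{LM}(\rho)$'' is false. Condition A says $\mathrm{lcm}(b_i,\alpha_i)\neq b_i$, i.e.\ that $\alpha_i$ does not divide $b_i$ \emph{as an integer}; monomial divisibility only needs $\alpha_i\le b_i$ for all $i$, and integer non-divisibility does not exclude that. Concretely, $\alpha=(2,0,\dots,0)$ and $b=(3,1,\dots,1)$ satisfy Condition A (since $\mathrm{lcm}(3,2)=6\neq 3$ and $\mathrm{lcm}(1,0)=0\neq 1$) and are compatible with $\alpha_l=0$, yet $x_1^{2}$ divides $x_1^{3}x_2\cdots x_l$. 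So Condition A alone does not give minimality of $\mathrm{LM}(\rho)$, and without minimality criterion (iv) cannot be invoked to rule out Cohen--Macaulayness. The step can be repaired, but by a different argument: if some $\mathrm{LM}(f_i)$ divided $\mathrm{LM}(\rho)$, then, using the conclusion of Lemma \ref{GG} that $G_1\cup G_2\cup\{\rho\}$ is a Gr\"obner basis, one would get $\mathrm{in}_<(\mathfrak{p}(\Gamma))=\mathrm{in}_<(\mathfrak{p}(\Gamma_1))A+\mathrm{in}_<(\mathfrak{p}(\Gamma_2))A$ in $A=\mathbb{K}[x_1,\dots,x_l,y_1,\dots,y_k]$; since Krull dimension is preserved by passing to initial ideals, this forces $\dim A/\mathrm{in}_<(\mathfrak{p}(\Gamma))=\dim\big(\mathbb{K}[x]/\mathrm{in}_<(\mathfrak{p}(\Gamma_1))\big)+\dim\big(\mathbb{K}[y]/\mathrm{in}_<(\mathfrak{p}(\Gamma_2))\big)=2$, contradicting $\dim A/\mathfrak{p}(\Gamma)=\dim\mathbb{K}[\Gamma]=1$. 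Hence no $\mathrm{LM}(f_i)$ can divide $\mathrm{LM}(\rho)$, $\mathrm{LM}(\rho)$ is a minimal generator divisible by $x_l$, and your conclusion stands; with this substitution your write-up is sound and strictly more complete than the paper's own one-line treatment of the second case.
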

		
		\begin{proof}
			
			\begin{enumerate}
				\item Assume that $pn_k$ is the largest integer among the generators of $\Gamma=\Gamma_{1}\#^n_{p,q} \Gamma_{2}$ then $y_k$ will be the deciding variable for the Cohen-Macaulayness of $\overline{C(\Gamma)}$. From Lemma \ref{GG}, $G^{h}=G_{1}^{h} \cup G_{2}^{h} \cup \{x^{b_{l}}_{l}-x_{0}^{b_{l}-(a_{1}+\cdots+a_{k})}y^{a_{1}}_{1}y^{a_{2}}_{2}\dots y^{a_{k}}_{k}\}$ is a Gr\"{o}bner bases of $\overline{\frak{p}(\Gamma)}$, 
				with respect to the degree reverse lexicographic ordering $x_{1}>\dots>x_{l}>y_{1}>\dots>y_{k}>x_{0}$, 
				where $G_{1}$ and $G_{2}$ are Gr\"{o}bner bases of $\frak{p}(\Gamma_{1})$ and $\frak{p}(\Gamma_{2})$, 
				with respect to the degree reverse lexicographic ordering $x_{1}>\dots>x_{l}$ and $y_{1}>\dots>y_{k}$ 
				respectively. Since $ \overline{C(\Gamma_{2})}$ is  arithmetically 
				Cohen-Macaulay, by Theorem \ref{Cond-CM}, 
				$y_{k}$ does not divide the leading monomial of any element in $G_{1}^{h}$ and $G_{2}^{h}$, and also $\mathrm{LM}(x^{b_{1}}_{1}\dots x_{l}^{b_l}-x_{0}^{b_{l}-(a_{1}+\cdots+a_{k})}y^{a_{1}}_{1}y^{a_{2}}_{2}\dots y^{a_{k}}_{k})=x^{b_{1}}_{1}\dots x_{l}^{b_l}$. Thus, $y_{k}$ does not divide the leading monomial of any element in $G^{h}$, 
				which is a Gr\"{o}bner basis with respect to to the degree reverse lexicographic ordering 
				induced by $x_{1} >...>x_{l}>y_{1}>...>y_{k}>x_{0}$. Thus, by Theorem \ref{Cond-CM}, 
				$\overline{C(\Gamma)}$ is arithmetically Cohen-Macaulay.
				
				\item If $qm_l$ is the largest integer among the generators of $\Gamma=\Gamma_{1}\#^n_{p,q} \Gamma_{2}$ then $x_l$ is the deciding variable and $x_l$ divides $\mathrm{LM}(\rho)$, therefore  by Theorem \ref{Cond-CM}, $\overline{C(\Gamma)}$ is not arithmetically Cohen-Macaulay.
			\end{enumerate}	
		 
	\end{proof}
		
		\begin{corollary}\label{non-CM}
			Under the hypothesis of generalized nice gluing of $\Gamma_{1}$ and $\Gamma_{2}$ with respect to $p,q$, 
			let $\Gamma=\Gamma_{1} \#^{n}_{p,q} \Gamma_{2}$. If $\overline{C(\Gamma_1)}$ and $\overline{C(\Gamma_2)}$ are  Gorenstein then $\overline{C(\Gamma)}$ is Gorenstein.
		\end{corollary}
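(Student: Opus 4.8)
The plan is to realise $\mathbb{K}[\overline{\Gamma}]$ as a Gorenstein ring modulo a single regular element. First I would fix the relevant case: for $\overline{C(\Gamma)}$ to be Gorenstein it must in particular be arithmetically Cohen--Macaulay, and by Theorem \ref{GT} this happens only when $pn_{k}$ is the largest generator of $\Gamma$. So I work in that case, where Theorem \ref{GT} already delivers that $\mathbb{K}[\overline{\Gamma}]$ is Cohen--Macaulay; it then remains to show that its Cohen--Macaulay type is $1$.

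Next I would exploit the explicit Gr\"obner basis of Lemma \ref{GG}. Writing $S_{1}=\mathbb{K}[x_{0},x_{1},\ldots,x_{l}]$, $S_{2}=\mathbb{K}[x_{0},y_{1},\ldots,y_{k}]$ and $S=S_{1}\otimes_{\mathbb{K}[x_{0}]}S_{2}$, Lemma \ref{Cri-Grob} identifies $\langle G_{1}^{h}\rangle$ and $\langle G_{2}^{h}\rangle$ with $\overline{\mathfrak{p}(\Gamma_{1})}$ and $\overline{\mathfrak{p}(\Gamma_{2})}$, so Lemma \ref{GG} gives the decomposition
$$\overline{\mathfrak{p}(\Gamma)}=\overline{\mathfrak{p}(\Gamma_{1})}S+\overline{\mathfrak{p}(\Gamma_{2})}S+(\rho^{h}),\qquad \rho^{h}=x_{1}^{b_{1}}\cdots x_{l}^{b_{l}}-x_{0}^{\,c}\,y_{1}^{a_{1}}\cdots y_{k}^{a_{k}},\quad c=\textstyle\sum b_{i}-\sum a_{j}\geq 0.$$
Setting $T=S/\bigl(\overline{\mathfrak{p}(\Gamma_{1})}S+\overline{\mathfrak{p}(\Gamma_{2})}S\bigr)=\mathbb{K}[\overline{\Gamma_{1}}]\otimes_{\mathbb{K}[x_{0}]}\mathbb{K}[\overline{\Gamma_{2}}]$, we have $\mathbb{K}[\overline{\Gamma}]=T/(\rho^{h})$. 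The crucial point is that the two homogenizing variables coincide, so the amalgamation is over $\mathbb{K}[x_{0}]$ rather than over $\mathbb{K}$.

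The heart of the argument is to resolve $T$. Since each two-dimensional Cohen--Macaulay ring $\mathbb{K}[\overline{\Gamma_{i}}]$ is a domain in which $x_{0}$ acts as a nonzero element (it maps to a power of $s$ under $\eta^{h}$), it is torsion-free, hence flat, over the principal ideal domain $\mathbb{K}[x_{0}]$. Consequently $\mathrm{Tor}^{\mathbb{K}[x_{0}]}_{>0}(\mathbb{K}[\overline{\Gamma_{1}}],\mathbb{K}[\overline{\Gamma_{2}}])=0$, and the total complex of the $\mathbb{K}[x_{0}]$-tensor product of the minimal $S_{i}$-free resolutions of $\mathbb{K}[\overline{\Gamma_{i}}]$ is a minimal $S$-free resolution of $T$ by K\"unneth. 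Hence $\beta_{i}^{S}(T)=\sum_{a+b=i}\beta_{a}^{S_{1}}(\mathbb{K}[\overline{\Gamma_{1}}])\,\beta_{b}^{S_{2}}(\mathbb{K}[\overline{\Gamma_{2}}])$, so $\mathrm{pd}_{S}(T)=(l-1)+(k-1)$ and, as both projective closures are Gorenstein, $\beta_{\mathrm{top}}^{S}(T)=1\cdot 1=1$. The Auslander--Buchsbaum formula gives $\mathrm{depth}\,T=\dim S-\mathrm{pd}_{S}(T)=3$, while $\dim T\leq \dim T/(\rho^{h})+1=\dim\mathbb{K}[\overline{\Gamma}]+1=3$; therefore $T$ is Cohen--Macaulay of dimension $3$ with type $1$, i.e.\ Gorenstein.

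Finally I would check that $\rho^{h}$ is a nonzerodivisor on $T$: being Cohen--Macaulay, $T$ is unmixed, so it suffices that $\dim T/(\rho^{h})=\dim\mathbb{K}[\overline{\Gamma}]=2=\dim T-1$, which holds. Thus $\rho^{h}$ is regular, and a Gorenstein ring modulo a regular element is Gorenstein, giving that $\mathbb{K}[\overline{\Gamma}]=T/(\rho^{h})$ is Gorenstein. I expect the main obstacle to be the resolution of $T$: one must verify that the shared homogenizing variable $x_{0}$ does not spoil the multiplicativity of Betti numbers, and this is exactly where flatness over $\mathbb{K}[x_{0}]$ (equivalently, the vanishing of the higher $\mathrm{Tor}$) is needed to keep $\beta_{\mathrm{top}}^{S}(T)=1$.
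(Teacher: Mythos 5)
Your proof is correct, but it follows a genuinely different route from the paper's. The paper disposes of this corollary in a single line of citations: Lemma \ref{GG} (the Gr\"obner basis of $\overline{\mathfrak{p}(\Gamma)}$), the result of Gimenez--Srinivasan \cite[Proposition 3.3]{SMR} that an affine gluing $\mathbb{K}[\Gamma]$ inherits Gorensteinness from $\mathbb{K}[\Gamma_1]$ and $\mathbb{K}[\Gamma_2]$, and the transfer theorem \cite[Theorem 3.13]{Saha-Gluing}, which under the Gr\"obner condition furnished by Lemma \ref{GG} carries the Betti sequence (hence Gorensteinness) from the affine curve $\mathbb{K}[\Gamma]$ to its projective closure $\mathbb{K}[\overline{\Gamma}]$. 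So the paper works at the affine level and then homogenizes; you work directly at the projective level. You use Lemma \ref{GG} only for the ideal decomposition $\overline{\mathfrak{p}(\Gamma)}=\overline{\mathfrak{p}(\Gamma_{1})}S+\overline{\mathfrak{p}(\Gamma_{2})}S+(\rho^{h})$, then build $T=\mathbb{K}[\overline{\Gamma_{1}}]\otimes_{\mathbb{K}[x_{0}]}\mathbb{K}[\overline{\Gamma_{2}}]$, resolve it by the K\"unneth argument over the PID $\mathbb{K}[x_{0}]$ (your flatness-via-torsion-freeness observation is exactly what is needed, and the minimality and Betti multiplicativity claims are sound), conclude $T$ is Gorenstein by Auslander--Buchsbaum plus the type-one top Betti number, and finally cut by $\rho^{h}$, which is regular because $T$ is Cohen--Macaulay and unmixed. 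In effect you reprove, in the special case at hand, both of the external results the paper leans on, in one self-contained stroke; what the paper's route buys instead is brevity and the full Betti sequence of $\mathbb{K}[\overline{\Gamma}]$ rather than just its last entry, while your route makes visible the one genuinely delicate point that the citations hide, namely that the two projective coordinate rings share the homogenizing variable, forcing the amalgamation to be over $\mathbb{K}[x_{0}]$ rather than over $\mathbb{K}$.

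One further point is in your favour rather than against you: your opening restriction to the case where $pn_{k}$ is the largest generator is not cosmetic. By the second bullet of Theorem \ref{GT}, when $qm_{l}$ is the largest generator $\overline{C(\Gamma)}$ is never arithmetically Cohen--Macaulay, hence never Gorenstein, so the corollary as literally stated can only hold in the case you isolate; the paper's statement and its citation-proof pass over this dichotomy silently. (Both your argument and the paper's also tacitly assume Condition A, since both invoke Lemma \ref{GG}.)
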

		\begin{proof}
			Proof is directly follows from Theorem \ref{GG}, \cite[Proposition 3.3]{SMR} and \cite[Theorem 3.13]{Saha-Gluing}.
		\end{proof}
		
		\medspace
		
		\begin{example}
			{\rm
				Let $\Gamma_1=\langle 3,5,7 \rangle$ and $\Gamma_2=\langle 9,11 \rangle $ with $p=3\cdot3+5=14$, $q=9\cdot2+11=29$. Then $\Gamma=\Gamma_{1}\#^n_{p,q} \Gamma_{2}=\langle 87,145,203,126,154 \rangle$. 	
				Note that $x_3$ is the variable corresponding to the largest integer of the minimal generator of $\Gamma$.  Thus $\overline{C(\Gamma)}$ is not arithmetically Cohen-Macaulay (verified with the help of MACAULAY 2 \cite{Macaulay}). But take $p=2\cdot 3+3\cdot 5=21, q=29$, then $\Gamma=\Gamma_{1}\#^n_{p,q} \Gamma_{2}=\langle 87,145,203,189,231 \rangle$. Here $y_2$ is the variable corresponding to the largest integer of the minimal generator of $\Gamma$.  Thus $\overline{C(\Gamma)}$ is  arithmetically Cohen-Macaulay
			}
		\end{example}
				
				\medspace

		\subsection{Gluing of tangent Cone}
		
		We recall that an affine monomial curve $C(\Gamma)$ is a curve with generic zero $(t^{n_1} ,\dots,t^{n_e} )$ in the affine $e$-space $\mathbb{A}^e$ over an algebraically closed field $\mathbb{K}$, where $\Gamma=\langle n_1,\dots,n_e \rangle$. The semigroup ring associated
		to the monomial curve $C(\Gamma)$ is $\mathbb{K}[[t^{n_1} ,\dots,t^{n_e} ]])$, and the Hilbert
		function of this local ring is the Hilbert function of its associated graded ring
		$\mathrm{gr}_{\mathfrak{m}}(\mathbb{K}[[t^{n_1} ,\dots,t^{n_e} ]])$, which is isomorphic to the ring $\mathbb{K}[x_1,\dots,x_e]/\mathfrak{p}^{\star}(\Gamma)$, where
		$\mathfrak{p}(\Gamma)$ is the defining ideal of $C(\Gamma)$ and $\mathfrak{p}^{\star}(\Gamma)$ is the ideal generated by the polynomials	$f^{\star}$, with $f$ in $\mathfrak{p}(\Gamma)$ and $f^{\star}$ being the homogeneous summand of $f$ of least degree.
		In other words, $\mathfrak{p}^{\star}(\Gamma)$ is the defining ideal of the tangent cone of $C(\Gamma)$ at $0$.

		\medspace

		Let us recall the definition of \emph{star gluing} of numerical semigroups.
		
		\begin{definition}\label{Star-Gluing}{\rm 
				The numerical semigroup $\Gamma_{1}\#_{p,q} \Gamma_{2}$,  
				obtained by gluing of  $\Gamma_{1} = \Gamma(m_{1},\dots, m_{l})$ and $\Gamma_{2} = \Gamma(n_{1},\dots, n_{k})$, with respect to the positive integers $p$ and $q$, is 
				called a \textit{generalized star gluing} if $p = b_{1}m_{1}+\dots +b_l m_l \in \Gamma_{1}$ and $q = a_{1}n_{1}+a_{2}n_{2}+\dots+a_{k}n_{k} \in \Gamma_{2}$, 
				with $ a_{1}+a_{2}+\dots+a_{k} < b_{l}+\dots +b_l.$ We use the notation 
				$\Gamma_{1}\star_{p,q} \Gamma_{2}$ to denote the generalized star gluing 
				of $\Gamma_{1}$ and $\Gamma_{2}$, with respect to the positive integers 
				$p$ and $q$.
			}
		\end{definition}
		
		To prove the Cohen-Macaulayness of tangent cone of monomial curve obtained by star gluing, we first write the criterion for checking
		the Cohen-Macaulayness of the tangent cone of a monomial curve
		
		\begin{lemma}\label{Criterion}
			Let $\Gamma=\langle n_1,\dots,n_k\rangle$ be a numerical semigroup minimally generated by $n_1 < \dots <n_k$, let $C=C(\Gamma)$ be the associated  monomial curve and let $G=\{f_1,\dots,f_s\}$ be a minimal Gr\"{o}bner basis of the ideal $\mathfrak{p}(\Gamma) \subset \mathbb{K}[x_1,\dots,x_k]$ with respect to the negative degree lexicographic ordering that makes $x_1$ the lowest variable. $C$ has Cohen-Macaulay tangent cone at the origin if and only $x_1$ does not divide $\mathrm{LM}(f_i)$ for $1 \leq i \leq s$.
		\end{lemma}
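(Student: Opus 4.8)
The plan is to reduce Cohen--Macaulayness of the tangent cone to the regularity of the single variable $x_1$, and then to detect that regularity on the initial ideal produced by the standard basis $G$. Write $A=\mathbb{K}[x_1,\dots,x_k]/\mathfrak{p}^{\star}(\Gamma)$ for the tangent cone; it is a standard graded $\mathbb{K}$-algebra of Krull dimension $1$. Since $n_1$ is the least generator, $\mathfrak{m}^{s+1}=t^{n_1}\mathfrak{m}^{s}$ for $s\gg 0$, so $(t^{n_1})$ is a minimal reduction of the maximal ideal of $\mathbb{K}[[t^{n_1},\dots,t^{n_k}]]$ and its leading form is exactly $x_1\in A_1$. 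By the classical theory of superficial elements (Valabrega--Valla), for this one-dimensional ring $A$ is Cohen--Macaulay if and only if $x_1$ is a non-zerodivisor on $A$. Thus it suffices to prove that $x_1$ is a non-zerodivisor on $A$ if and only if $x_1\nmid \mathrm{LM}(f_i)$ for all $i$.

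First I would record what $G$ computes. Because $<$ is a negative-degree order, for every $f\in\mathfrak{p}(\Gamma)$ its leading monomial is one of least total degree occurring in $f$, hence $\mathrm{LM}(f)=\mathrm{LM}(f^{\star})$; since $G$ is a standard basis this gives $\mathrm{in}_{<}(\mathfrak{p}(\Gamma))=(\mathrm{LM}(f_1),\dots,\mathrm{LM}(f_s))$, the initial forms $f_1^{\star},\dots,f_s^{\star}$ form a homogeneous Gr\"obner basis of $\mathfrak{p}^{\star}(\Gamma)$ (the tangent-cone algorithm), and so $\mathrm{in}_{<}(\mathfrak{p}^{\star})=(\mathrm{LM}(f_1),\dots,\mathrm{LM}(f_s))$. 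Set $B=\mathbb{K}[x_1,\dots,x_k]/\mathrm{in}_{<}(\mathfrak{p}^{\star})$; since $\mathfrak{p}^{\star}$ is homogeneous, $A$ and $B$ have the same Hilbert function. Because $\mathrm{in}_{<}(\mathfrak{p}^{\star})$ is a monomial ideal, $x_1$ is a non-zerodivisor on $B$ if and only if $x_1$ divides none of its minimal monomial generators, i.e. none of the $\mathrm{LM}(f_i)$. So the statement reduces to: $x_1$ is regular on $A$ if and only if $x_1$ is regular on $B$.

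For the direction that $x_1$ regular on $B$ implies $x_1$ regular on $A$ I would argue with Hilbert functions. In any degree $n$, multiplication by $x_1$ forces $H_{A/x_1A}(n)\ge H_A(n)-H_A(n-1)$, with equality precisely when $x_1$ is regular, and similarly for $B$. Using the inclusion $\mathrm{in}_{<}(\mathfrak{p}^{\star})+(x_1)\subseteq \mathrm{in}_{<}(\mathfrak{p}^{\star}+(x_1))$ together with the fact that $\mathfrak{p}^{\star}+(x_1)$ and its initial ideal share a Hilbert function, one gets $H_{A/x_1A}(n)\le H_{B/x_1B}(n)$ for all $n$. If $x_1$ is regular on $B$ then $H_{B/x_1B}(n)=H_B(n)-H_B(n-1)=H_A(n)-H_A(n-1)$, and sandwiching gives $H_{A/x_1A}(n)=H_A(n)-H_A(n-1)$, so $x_1$ is regular on $A$; since $\dim A=1$ this is the same as $A$ being Cohen--Macaulay.

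The hard part, and the main obstacle, is the converse. Here I would exploit that the order is set up so that $x_1$ is the distinguished smallest variable, which yields the local analogue of the reverse-lexicographic last-variable property, namely the colon identity $\mathrm{in}_{<}(\mathfrak{p}^{\star}):x_1=\mathrm{in}_{<}(\mathfrak{p}^{\star}:x_1)$. Granting this, if $A$ is Cohen--Macaulay then $x_1$ is regular on $A$, i.e. $\mathfrak{p}^{\star}:x_1=\mathfrak{p}^{\star}$; passing to initial ideals yields $\mathrm{in}_{<}(\mathfrak{p}^{\star}):x_1=\mathrm{in}_{<}(\mathfrak{p}^{\star}:x_1)=\mathrm{in}_{<}(\mathfrak{p}^{\star})$, so $x_1$ is regular on $B$ and therefore divides none of the $\mathrm{LM}(f_i)$. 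The crux is thus verifying this colon identity for the negative-degree order with $x_1$ lowest; I would prove it by the standard smallest-variable monomial argument for degree-compatible local orders (if $x_1 m\in\mathrm{in}_{<}(\mathfrak{p}^{\star})$ then the tie-break on the distinguished variable forces a preimage whose leading monomial is already divisible by $x_1$), adapted to the homogeneous local setting, and I expect this order-theoretic bookkeeping to be the only delicate point.
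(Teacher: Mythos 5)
The paper offers no proof of this lemma at all: its ``proof'' is the single line ``See \cite[Lemma 2.7]{Arslan}''. So your proposal has to be measured against the standard argument behind that citation, and in outline you have reproduced it faithfully: $(t^{n_1})$ is a reduction of $\mathfrak{m}$ because $n_1$ is the multiplicity, so for the one-dimensional tangent cone $A$, Cohen--Macaulayness is equivalent to the degree-one parameter $x_1$ being a non-zerodivisor; the negative-degree order gives $\mathrm{LM}(f)=\mathrm{LM}(f^{\star})$ and the tangent-cone algorithm gives $\mathrm{in}_{<}(\mathfrak{p}^{\star})=(\mathrm{LM}(f_1),\dots,\mathrm{LM}(f_s))$; the monomial-ideal test for regularity of $x_1$ on $B$ is correct; and your Hilbert-function sandwich proves the implication ``$x_1\nmid\mathrm{LM}(f_i)$ for all $i$ $\Rightarrow$ tangent cone Cohen--Macaulay'' for \emph{any} negative-degree term order. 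Up to this point everything is sound.

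The gap is exactly at the step you flagged as the ``only delicate point,'' and it is not bookkeeping: the colon identity $\mathrm{in}_{<}(J):x_1=\mathrm{in}_{<}(J:x_1)$ is \emph{not} a property of degree-compatible orders that make $x_1$ the lowest variable; it is specific to the \emph{reverse} lexicographic tie-break. What your argument needs is: for homogeneous $f$, if $x_1\mid\mathrm{LM}(f)$ then $x_1\mid f$. This holds for revlex with $x_1$ last, but fails for the lexicographic tie-break that the lemma literally states: with $x_3>x_2>x_1$ and $f=x_2^2-x_1x_3$, the lex leading monomial is $x_1x_3$ while $x_1\nmid f$; correspondingly, for $J=(f)$ one has $\mathrm{in}_{<}(J):x_1=(x_3)$ but $\mathrm{in}_{<}(J:x_1)=\mathrm{in}_{<}(J)=(x_1x_3)$. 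In fact the lemma as literally stated is false: $\Gamma=\langle 3,4,5\rangle$ has a Cohen--Macaulay tangent cone, yet $x_2^2-x_1x_3\in\mathfrak{p}(\Gamma)$ has negative-degree-lex leading monomial $x_1x_3$, and since $\mathfrak{p}(\Gamma)$ contains no linear forms, $x_1x_3$ is a minimal generator of $\mathrm{in}_{<}(\mathfrak{p}(\Gamma))$ and hence occurs as $\mathrm{LM}(f_i)$ in every minimal standard basis. The word ``lexicographic'' is a misquotation of Arslan's Lemma 2.7, which uses the negative degree \emph{reverse} lexicographic order --- the order the paper itself uses in the proof of its theorem on tangent cones of star gluings. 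Once you replace lex by revlex, your colon identity becomes the graded Bayer--Stillman fact and your proof closes up; but as written, you attribute the crucial property to the smallest-variable convention in general, which is precisely where the argument (and the statement) would fail.
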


		\begin{proof}
			See \cite[Lemma 2.7]{Arslan}.
		\end{proof}

		\begin{remark}
		 For the Cohen-Macaulayness of tangent cone of affine monomial curve, it is important to find the smallest integer among the generator of associated numerical semigroup. In case of numerical semigroup $\Gamma_{1} \star_{p,q} \Gamma_{2}$, the possibility of smallest generator is either $qm_1$ or $pn_1$.
		\end{remark}
		
		\medspace
		
			\noindent \textbf{Condition B}: Let $\mathrm{lcm}(a_i,\alpha_i)\neq a_i$  for all $1 \leq i \leq k$, where  $y_{1}^{\alpha_1}\dots y_{k}^{\alpha_k}=\mathrm{LM}(g)$ and $g $ is an arbitrary element of a reduced Gr\"{o}bner basis of $\mathfrak{p}(\Gamma_2)$ and $a_i$ is according to Definition \ref{Star-Gluing} . This condition has been defined in such a way that $S$-polynomial of $\rho$ and $g$ reduces to zero after a appropriate multidivison by $G$ (defined in Theorem \ref{Tangent Cone}).
		
		\begin{theorem}\label{Tangent Cone}
			
			Let $\Gamma_{1} = \Gamma(m_{1},\dots, m_{l})$ and $\Gamma_{2} = \Gamma(n_{1},\dots, n_{k})$ be two numerical semigroups with $m_{1} < \cdots < m_{l}$ and $n_{1} < \cdots < n_{k}$.
			Let 
			$p = b_{1}m_{1}+\dots+p_l m_l \in \Gamma_{1}$ and 
			$q = a_{1}n_{1} +\cdots +a_{k}n_{k} \in\Gamma_{2}$ be 
			two positive integers satisfying 
			$\gcd(p, q) = 1 $, such that $\Gamma_{1}\star_{p,q} \Gamma_{2}$ is the generalized star gluing of $\Gamma_{1}$ 
			and $\Gamma_{2}$ with $b_1,\dots,b_l$ satisfy condition $B$.	If the associated monomial curves $C(\Gamma_1)$ and $C(\Gamma_2)$ have Cohen-Macaulay tangent cone at the origin.
				
			\begin{itemize}
				\item If $qm_1$ is the smallest integer among the generators of $\Gamma=\Gamma_{1}\star_{p,q} \Gamma_{2}$ then $C=C(\Gamma)$ also has a Cohen-Macaulay tangent cone at the origin.
				
				\item If $pn_1$ is the smallest integer among the generators of $\Gamma=\Gamma_{1}\star_{p,q} \Gamma_{2}$ then $C=C(\Gamma)$ has never a Cohen-Macaulay tangent cone at the origin.
			\end{itemize}			
	\end{theorem}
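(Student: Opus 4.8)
The plan is to run the tangent-cone analogue of the argument in Lemma \ref{GG} and Theorem \ref{GT}, with the degree reverse lexicographic order replaced by the negative degree lexicographic order of Lemma \ref{Criterion} and with the roles of the two semigroups (and of Conditions A and B) interchanged. Throughout write $x^{b} = x_1^{b_1}\cdots x_l^{b_l}$ and $y^{a} = y_1^{a_1}\cdots y_k^{a_k}$, so that $\rho = x^{b} - y^{a}$.

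First I would produce a Gr\"obner basis of $\mathfrak{p}(\Gamma)$ for the tangent-cone order. By Lemma 2.2 of \cite{SMR}, $\mathfrak{p}(\Gamma)$ is generated by $G = G_1 \cup G_2 \cup \{\rho\}$. Fix the negative degree lexicographic order in which the variable attached to the smallest generator of $\Gamma$ is the lowest, refined so that $x_1$ is lowest among the $x$'s and $y_1$ is lowest among the $y$'s. Since this order ranks a monomial of smaller total degree higher and the star-gluing inequality $a_1 + \cdots + a_k < b_1 + \cdots + b_l$ holds, the leading monomial of $\rho$ is its $y$-term, i.e. $\mathrm{LM}(\rho) = y^{a}$. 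I would then check Buchberger's criterion for $G$. The pairs inside $G_1$ and inside $G_2$ reduce to zero because $G_1, G_2$ are Gr\"obner bases; the pairs $S(f_i, g_j)$ and $S(\rho, f_i)$ reduce to zero because the leading monomials have disjoint ($x$ versus $y$) support. The only genuine computation is $S(\rho, g_j)$ for $g_j \in G_2$, whose leading monomials both lie in the $y$-variables; here Condition B (that $\mathrm{lcm}(a_i, \alpha_i) \neq a_i$ for every $i$ and every leading monomial $y_1^{\alpha_1}\cdots y_k^{\alpha_k}$ of $G_2$) is exactly what forces the multivariable division of $S(\rho, g_j)$ by $G$ to terminate at $0$, in complete parallel with the role of Condition A in Lemma \ref{GG}. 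This yields that $G$ is a Gr\"obner basis of $\mathfrak{p}(\Gamma)$ for the chosen order.

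With the Gr\"obner basis in hand, I would invoke Lemma \ref{Criterion}. In the first case $q m_1$ is the smallest generator, so the deciding variable is $x_1$. Cohen--Macaulayness of the tangent cone of $C(\Gamma_1)$ gives, through Lemma \ref{Criterion} applied to $\Gamma_1$, that $x_1 \nmid \mathrm{LM}(f_i)$ for every $f_i \in G_1$; each $\mathrm{LM}(g_j)$ and $\mathrm{LM}(\rho) = y^{a}$ are pure $y$-monomials, so $x_1$ divides none of them either. Hence $x_1$ divides no leading monomial of $G$, and the same holds for the minimal Gr\"obner basis extracted from it, so Lemma \ref{Criterion} gives that $C(\Gamma)$ has a Cohen--Macaulay tangent cone. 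In the second case $p n_1$ is the smallest generator, so the deciding variable is $y_1$; since $\mathrm{LM}(\rho) = y^{a}$ and $a_1 \geq 1$, the variable $y_1$ divides $\mathrm{LM}(\rho)$, and $\rho$ survives in the minimal Gr\"obner basis because its leading monomial $y^{a}$ is standard for $\mathfrak{p}(\Gamma_2)$ (we take the least-degree expression $q = a_1 n_1 + \cdots + a_k n_k$). By Lemma \ref{Criterion}, $C(\Gamma)$ can never have a Cohen--Macaulay tangent cone.

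I expect the main obstacle to be the reduction $S(\rho, g_j) \to_{G} 0$ in the Gr\"obner-basis step: one must track the componentwise exponents $\mathrm{lcm}(a_i, \alpha_i) - a_i$ and $\mathrm{lcm}(a_i, \alpha_i) - \alpha_i$ and verify that, after dividing first by $\rho$ (rewriting a $y^{a}$-factor as $x^{b}$) and then by the appropriate $g_{j'} \in G_2$, no nonzero remainder survives; it is precisely here that Condition B is indispensable. A secondary point to nail down is that $\rho$ is not deleted when passing to a minimal Gr\"obner basis in the second case, equivalently that $y^{a}$ is a minimal generator of the initial ideal and that $n_1$ genuinely occurs in the chosen expression of $q$, so $a_1 \geq 1$; without this, $y_1$ would divide no leading monomial (recall $y_1 \nmid \mathrm{LM}(g_j)$ by the Cohen--Macaulayness of $C(\Gamma_2)$) and the dichotomy would collapse.
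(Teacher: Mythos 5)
Your proposal follows essentially the same route as the paper's own proof: you show $G_1\cup G_2\cup\{\rho\}$ is a Gr\"obner basis for the negative-degree order with $\mathrm{LM}(\rho)=y_1^{a_1}\cdots y_k^{a_k}$ forced by the star-gluing degree inequality, dispose of the mixed $S$-pairs by disjoint support and the coprime-leading-monomial trick, invoke Condition B for the $(\rho,g_j)$ pairs, and then apply the divisibility criterion of Lemma \ref{Criterion} to conclude each case. If anything, your write-up is cleaner than the paper's: it correctly assigns $x_1$ as the deciding variable in the first case and $y_1$ in the second (where the paper's text has copy-paste slips interchanging them), and the two points you flag as needing care --- that $a_1\geq 1$ so $y_1$ divides $\mathrm{LM}(\rho)$, and that $\rho$ survives in a minimal Gr\"obner basis --- are exactly the details the paper glosses over.
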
		
	
		\begin{proof}
		Let $G_{1} = \{f_{1},\ldots, f_{d}\}$ be a reduced Gr\"{o}bner basis of the ideal $\mathfrak{p}(\Gamma_{1}) \subset \mathbb{K}[x_{1},\ldots, x_{l}]$, 
		with respect to the negative degree
		reverse lexicographical ordering induced by 
		$x_{l}> \cdots> x_{1}$ and $ G_{2} = \{g_{1}, \ldots , g_{r}\}$ be
		a reduced Gr\"{o}bner basis of the ideal 
		$\mathfrak{p}(\Gamma_{2}) \subset \mathbb{K}[y_{1}, \ldots , y_{k}]$, with respect to the negative degree
		reverse lexicographical ordering induced by $y_{l}>\ldots > y_{1}$.
		We show that $G=\{f_{1}, \dots, f_{d}, g_{1}, \dots , g_{r}, 
		\rho=x_{1}^{b_{1}}\dots x_{l}^{b_l}-y^{a_{1}}_{1}y^{a_{2}}_{2} \dots y^{a_{k}}_{k} \}$ 
		is a Gr\"{o}bner basis  of $\frak{p}(\Gamma)$, with respect to the negative degree
		reverse lexicographical ordering induced by $x_{l} >\cdots>x_{1}>y_{k}>\cdots>y_{1}$. 
		Let $S(f, g)$ denote the $S$-polynomial of polynomials $f$ and $g$ in 
		$\mathbb{K}[x_{1},\ldots,x_{l},y_{1},\ldots,y_{k}]$. 
		With respect to the said monomial order, the leading monomial of any element of $G_{1}$ is the product of 
		monomials of the form $x_{i}^{\alpha_{i}}$ only, 
		and $G_{2}$ is the product of monomials of the form $y_{j}^{\beta_{j}}$ only, for some 
		non-negative integers $\alpha_{i}$, $\beta_{j}$ and $1\leq i \leq l$, $1\leq j \leq k$. 
		Therefore $S(f_{i},g_{j}) \rightarrow_{G} 0$. It is clear from the condition of generalized star gluing and definition of negative degree reverse lexicographic ordering that 
		$\mathrm{LM}(x_{1}^{b_{1}}\dots x_{l}^{b_{l}}-y^{a_{1}}_{1}y^{a_{2}}_{2}\dots y^{a_{k}}_{k})=y_{1}^{a_{1}}\dots y_{k}^{a_{k}}$. 
		Let  $g_i=y_1^{\alpha_1}\dots x_{l}^{\alpha_{l}}-y_1^{\beta_1}\dots y_{l}^{\beta_{l}}$ be an arbitrary element of $G_2$ with $\mathrm{LM}(g_i)=y_1^{\alpha_1}\dots y_{l}^{\alpha_{l}}$.
		Now the spolynomial of $\rho$ and $g_i$ is given by $S(\rho,g_i)=-x_{1}^{b_1}x_{2}^{b_2}\dots x_{l}^{b_l}y_{1}^{[\alpha_1,a_1]-a_1}\dots y_k^{[\alpha_k,a_k]-a_k}-y_1^{[\alpha_1,a_1]-\alpha_1+\beta_1}\dots y^{[\alpha_k,a_k]-\alpha_k+\beta_k}$. Since $\sum_{s=1}^{k}([a_s,\alpha_s]-a_s)+b_1+\dots +b_l  >	\sum_{s=1}^{k}([a_s,\alpha_s]-\alpha_s)+\beta_1+\dots +\beta_k$, we have $\mathrm{LM}(S(\rho,f_i))=	-x_{1}^{b_1}x_{2}^{b_2}\dots x_{l}^{b_l}y_{1}^{[\alpha_1,a_1]-a_1}y_k^{[\alpha_k,a_k]-a_k}$,
		and using multi variable division algorithm $S(\rho,f_i)$ reduced to zero after division by $G_1 \cup G_2 \cup \{\rho\}$ due to condition B.
		Since for any $g_i \in G_2$, $\mathrm{gcd}(\mathrm{LM}(\rho)=x_{1}^{b_{1}}\dots x_{l}^{b_{l}},\mathrm{LM}(g_i)= \text{some monomial in } y_j's)=1$,  we have $S(\rho,g_i)$ reduces to zero after division by $G_1 \cup G_2 \cup \{\rho\}$.
		By the Buchberger's criterion, $G=G_1 \cup G_2 \cup \{\rho\}$ is a minimal Gr\"{o}bner  basis 
		of $\frak{p}(\Gamma)$.	
		
		If $qm_1$ is the smallest integer among the generators of $\Gamma$ and it is straightforward to check that $y_1$ do not divide the leading monomial of any elements of $G$ as $\Gamma_2$ has Cohen-Macaulay tangent cone, hence by Lemma \ref{Criterion},   	$C(\Gamma)$ has Cohen-Macaulay tangent cone at origin. If $pn_1$ is the smallest integer then we can see that $x_1$ divide $\mathrm{LM}(\rho)$, therefore again by Lemma \ref{Criterion}, $C=C(\Gamma)$ has never a Cohen-Macaulay tangent cone at the origin.	
		\end{proof}
		
		\smallskip
		
		\begin{remark}
			If we take two Gorenstein monomial curves with Cohen-Macaulay tangent cone then the monomial curve obtained by star gluing is also Gorenstein (See \cite{SMR}), and has a Cohen-Macaulay tangent cone.
			Since monomial curves with Cohen-Macaulay tangent cone has a non-decreasing Hilbert function,  hence we can produce infinitely many $1$-dimensional Gorenstein monomial curve with non-decreasing Hilbert function, which gives  affirmative answer to Rossi's question.
		\end{remark}		
		
		\smallskip
		
				\begin{example}{\rm
				Let $\Gamma_1=\langle 3,5,7 \rangle$ and $\Gamma_2=\langle 9,11 \rangle $ with $p=4.7=28$, $q=2.9+11=29$. Then $\Gamma=\Gamma_{1}\star_{p,q} \Gamma_{2}=\langle 87,145,203,189,231 \rangle$. Then $G=\{x_2^2-x_1 x_3,x_3^2-x_1^3x_2,x_2 x_3-x_1^4,y_2^9-y_1^{11},y_1^2 y_2-x_3^4\}$ is a reduced Gr\"{o}bner basis with respect to the negative degree reverse lexicographic ordering induced by $y_2>y_1>x_3>x_2>x_1$ of $\mathfrak{p}(\Gamma)$. 	
				Note that $x_1$ is the variable corresponding to the smallest integer of the minimal generator of $\Gamma$ and $x_1$ does not divide leading monomial of any element of $G$. Thus the tangent cone of $C(\Gamma)$ is Cohen-Macaulay.}
		\end{example}
			
		\medspace
		
		\subsection{Extension of MPD semigroup}\label{MPD}
		
		Let $\mathcal{A}=\{\mathbf{a}_1,\dots,\mathbf{a}_n\} \subset \mathbb{N}^d$ and let $\Gamma$ is a submonoid of $\mathbb{N}^d$ generated by $\mathcal{A}$. An affine semigroup $\Gamma$ is said to be maximal projective dimension semigroup (MPD-semigorup)
		if $\mathrm{pd}_R(\mathbb{K}[\Gamma])=n-1$. Equivalently, $\mathrm{depth}_{R}(\mathbb{K}[\Gamma])=1$. Consider the cone of $\Gamma$ in $\mathbb{N}^d$,
		\[
		\mathrm{Cone}(\Gamma):=\Big\{\sum_{i=1}^n \lambda_i \mathbf{a_i} : \lambda_i \in \mathbb{Q}_{\geq 0}, i=1,\dots,n  \Big\}
		\]		
		and set $\mathcal{H}(\Gamma):=(\mathrm{Cone}(\Gamma) \setminus \Gamma)\cap \mathbb{N}^d$. An element $ f\in \mathcal{H}(\Gamma)$ is called a pseudo-Frobenius element of $\Gamma$ if $f+\gamma \in \Gamma$ for all $ \gamma \in \Gamma \setminus \{\mathbf{0}\}$. The set of pseudo-Frobenius elements of $\Gamma$ is denoted by $\mathrm{PF}(\Gamma)$.
		In \cite[Theorem 6]{Pseudo}, the authors proved that $\Gamma$ is a MPD-semigroup if and only if $\mathrm{PF}(\Gamma)\neq \emptyset$. In particular, they prove that if $\Gamma$ is a MPD-semigroup then $b \in \Gamma$ is the $\Gamma$-degree of the $(n-2)^{th}$-minimal syzygy of $\mathbb{K}[\Gamma]$ if and only if $b \in \{\mathbf{a}+\sum_{i=1}^{n}\mathbf{a}_i : \mathbf{a}\in \mathrm{PF}(\Gamma)\}$.
		
		Let $(\mathbf{F},\phi)$ be a graded minimal free $R$- resolution of $\mathbb{K}[\Gamma]$, where
		
		$$ \mathbf{F}: 0 \rightarrow R^{\beta_k} \xrightarrow {\phi_k} R^{\beta_{k-1}} \xrightarrow{\phi_{k-1}} \cdots \xrightarrow{\phi_2} R^{\beta_1} \xrightarrow{\phi_0}\rightarrow \mathbb{K}[\Gamma] \rightarrow 0. $$
		
		The elements $s_{i,j} \in \Gamma$ for which $R^{\beta_i}$=$\oplus_{j=1}^{\beta_i}R[-s_{i,j}]$ are called $i$-Betti $\Gamma$-degrees. Denote by $\mathcal{B}_i(\Gamma)$ the set of these $i$-Betti $\Gamma$-degrees for $1 \leq i \leq \mathrm{pd}(\Gamma)$ and let $\mathcal{B}_i(\Gamma)=0$ otherwise, where $\mathrm{pd}(\Gamma)$ is the projective dimension of $\mathbb{K}[\Gamma]$. Note that we allow $\mathcal{B}_i(\Gamma)$
		to contain repeating elements in a nonstandard way for convenience.	
		
		\medspace
		
		\begin{definition}[\cite{Extension}, Definition 2.1]\label{Ext}{\rm
			Given an affine semigroup $\Gamma$ generated minimally by
			the set $\{\mathbf{a}_1,\dots,\mathbf{a}_n\}$, recall that an extension of $\Gamma$ is an affine semigroup denoted by $E$ and generated by $l\mathbf{a}_1,\dots,l\mathbf{a}_n, \mathbf{a}$, where $l$ is a positive integer coprime to a component of $\mathbf{a}=u_1 \mathbf{a}_1+\dots+u_n \mathbf{a}_n$ for some non-negative integers $u_1,\dots,u_n$.}
			
		\end{definition}
		
		\begin{definition}[\cite{Pseudo},Definition 3.1]
			Let $\Gamma$ be an affine MPD-semigroup with $\mathcal{H}(\Gamma)$ is finite. For a fixed term order $\prec$ on $\mathbb{N}^d$, where $\rm{F}(\Gamma)\neq \emptyset$ such that $\rm{F}(\Gamma)_{\prec}=\rm{max_{\prec}}\mathcal{H}(\Gamma)\in \rm{F}(\Gamma)$. If $|\mathrm{PF}(\Gamma)|=1$ and $\mathrm{PF}(\Gamma)=\{\rm{F}(\Gamma)_{\prec}\}$, then $\Gamma$ is called a $ \prec$-symmetric semigroup.
		\end{definition}

		\begin{theorem}
		 Let $\Gamma$ be an affine MPD-semigroup with $\mathcal{H}(\Gamma)$ is finite then extension $E$ of affine semigroup $\Gamma$ is MPD-semigroup and $\mathcal{H}(E)$ is finite. In particular, $\mathrm{PF}(E)=\{lf+(l-1)\mathbf{a}\,| f \in \mathrm{PF}(\Gamma)\}$. Moreover, if $\Gamma$ is $\prec$-symmetric then $E$ is also $\prec$-symmetric.
		 
		\end{theorem}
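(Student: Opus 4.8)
The plan is to describe $E$ explicitly as a disjoint union of cosets of $l\Gamma$ and then transfer all of the Frobenius data of $\Gamma$ through this description. First I record two structural facts. Since each $l\mathbf{a}_i$ spans the same ray as $\mathbf{a}_i$ and $\mathbf{a}=\sum_i u_i\mathbf{a}_i\in\mathrm{Cone}(\Gamma)$, we have $\mathrm{Cone}(E)=\mathrm{Cone}(\Gamma)$; in particular $\mathcal{H}(E)=(\mathrm{Cone}(\Gamma)\cap\mathbb{N}^d)\setminus E$, and since $E\subseteq\Gamma$ this gives $\mathcal{H}(E)=\mathcal{H}(\Gamma)\sqcup(\Gamma\setminus E)$. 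Next, because $l\mathbf{a}=\sum_i u_i(l\mathbf{a}_i)\in l\Gamma$, every element of $E$ reduces to the form $w\mathbf{a}+l\gamma$ with $0\le w\le l-1$ and $\gamma\in\Gamma$; and this form is unique, since an equality $w_1\mathbf{a}+l\gamma_1=w_2\mathbf{a}+l\gamma_2$ read off in the coordinate $j$ with $\gcd(a^{(j)},l)=1$ forces $w_1\equiv w_2\pmod l$, hence $w_1=w_2$ and then $\gamma_1=\gamma_2$ by torsion-freeness. Thus $E=\bigsqcup_{w=0}^{l-1}(w\mathbf{a}+l\Gamma)$, and this coprimality-driven normal form is the engine of the whole proof.

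The core step is the identity $\mathrm{PF}(E)=\{lf+(l-1)\mathbf{a}\mid f\in\mathrm{PF}(\Gamma)\}$. For $\supseteq$, fix $f\in\mathrm{PF}(\Gamma)$ and set $g=lf+(l-1)\mathbf{a}$. Since it suffices to test monoid generators, I would check that $g+l\mathbf{a}_i=l(f+\mathbf{a}_i)+(l-1)\mathbf{a}$ and $g+\mathbf{a}=l(f+\mathbf{a})$ lie in $E$, which is immediate because $f+\mathbf{a}_i,f+\mathbf{a}\in\Gamma$. Membership $g\in\mathrm{Cone}(\Gamma)\cap\mathbb{N}^d$ is clear, while $g\notin E$ follows from the normal form: if $g=w\mathbf{a}+l\gamma$ then reducing coordinate $j$ modulo $l$ gives $w=l-1$, whence $f=\gamma\in\Gamma$, contradicting $f\in\mathcal{H}(\Gamma)$. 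For $\subseteq$, take $g\in\mathrm{PF}(E)$. Applying $g+\mathbf{a}\in E$ and the normal form, write $g+\mathbf{a}=w\mathbf{a}+l\gamma'$; were $w\ge1$ we would get $g=(w-1)\mathbf{a}+l\gamma'\in E$, impossible, so $w=0$ and $g=l\gamma'-\mathbf{a}$. Setting $f:=\gamma'-\mathbf{a}$ yields $g=lf+(l-1)\mathbf{a}$, and I would verify $f\in\mathrm{PF}(\Gamma)$ in three steps: $f\notin\Gamma$ (else $g\in E$); $f+\gamma\in\Gamma$ for every $\gamma\in\Gamma\setminus\{0\}$, obtained by feeding $g+l\gamma=l(f+\gamma)+(l-1)\mathbf{a}\in E$ into the uniqueness argument (the coordinate-$j$ congruence pins the $\mathbf{a}$-exponent to $l-1$ and leaves $f+\gamma\in\Gamma$); and $f\in\mathrm{Cone}(\Gamma)\cap\mathbb{N}^d$, which I get by evaluating each facet functional $\ell_k$ of $\mathrm{Cone}(\Gamma)$ on $f+\mathbf{a}_{i(k)}\in\Gamma$ for a generator $\mathbf{a}_{i(k)}$ lying on the facet $\{\ell_k=0\}$, so that $\langle\ell_k,f\rangle=\langle\ell_k,f+\mathbf{a}_{i(k)}\rangle\ge0$. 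Since $\Gamma$ is MPD we have $\mathrm{PF}(\Gamma)\neq\emptyset$, so the identity forces $\mathrm{PF}(E)\neq\emptyset$, and hence $E$ is MPD by \cite[Theorem 6]{Pseudo}.

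It remains to prove $\mathcal{H}(E)$ is finite and to deduce the $\prec$-symmetric statement. By the decomposition above, finiteness reduces to showing $\Gamma\setminus E$ is finite: the idea is that for $\gamma\in\Gamma$ lying deep in $\mathrm{Cone}(\Gamma)$ the coprimality of $l$ with $a^{(j)}$ selects a residue $w\in\{0,\dots,l-1\}$ with $\gamma-w\mathbf{a}$ divisible by $l$, and once $\gamma$ is large the cone element $\tfrac1l(\gamma-w\mathbf{a})$ lies beyond the finite hole set $\mathcal{H}(\Gamma)$, hence in $\Gamma$, giving $\gamma\in E$. I expect this to be the main obstacle: converting ``deep and large'' into a clean effective bound requires combining the residue selection with the cofiniteness of $\Gamma$ in its cone and handling the finitely many boundary exceptions. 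Granting finiteness, the $\prec$-symmetric claim follows formally. If $\mathrm{PF}(\Gamma)=\{f_0\}$ with $f_0=\max_{\prec}\mathcal{H}(\Gamma)$, then the identity gives $\mathrm{PF}(E)=\{g_0\}$ with $g_0=lf_0+(l-1)\mathbf{a}$, so $|\mathrm{PF}(E)|=1$. To see $g_0=\max_{\prec}\mathcal{H}(E)$ I use monotonicity of the term order: any $h\in\mathcal{H}(\Gamma)$ satisfies $h\preceq f_0\preceq g_0$, and any $h\in\Gamma\setminus E$ is $h=w\mathbf{a}+lh'$ with normal-form quotient $h'\in\mathcal{H}(\Gamma)$ (its failing to lie in $\Gamma$ is exactly why $h\notin E$), whence $h\preceq(l-1)\mathbf{a}+lf_0=g_0$. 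Thus $g_0$ is the $\prec$-largest hole of $E$, and $E$ is $\prec$-symmetric.
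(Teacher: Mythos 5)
Your proof of the first two assertions --- that $E$ is MPD and that $\mathrm{PF}(E)=\{lf+(l-1)\mathbf{a} \mid f\in\mathrm{PF}(\Gamma)\}$ --- is correct, and it takes a genuinely different route from the paper's. The paper argues homologically: viewing the extension as a gluing, it imports from the proof of \cite[Lemma 3.4]{Sahin-SIFR} the formula $\mathcal{B}_i(E)=l\mathcal{B}_i(\Gamma)\cup l[\mathcal{B}_{i-1}(\Gamma)+\mathbf{a}]$ together with $\mathrm{pd}(E)=\mathrm{pd}(\Gamma)+1$, which yields the MPD property at once, and then reads off $\mathrm{PF}(E)$ by subtracting the sum of the generators of $E$ from the top Betti degrees, using the syzygetic characterization of pseudo-Frobenius elements from \cite{Pseudo}. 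You instead establish the coset decomposition $E=\bigsqcup_{w=0}^{l-1}(w\mathbf{a}+l\Gamma)$, with uniqueness of the normal form enforced by the coordinate of $\mathbf{a}$ coprime to $l$, verify the PF identity directly (your facet-functional argument that $f=\gamma'-\mathbf{a}$ lies in $\mathrm{Cone}(\Gamma)\cap\mathbb{N}^d$ settles a point the homological proof never has to touch), and only then invoke \cite[Theorem 6]{Pseudo} to convert $\mathrm{PF}(E)\neq\emptyset$ into the MPD property. Your route is more elementary and self-contained; the paper's is shorter but leans entirely on two external results.

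The gap you flagged --- finiteness of $\mathcal{H}(E)$ --- is genuine, and it cannot be closed: that assertion is false for $d\geq 2$. Your sketch fails exactly where you predicted: choosing $w$ so that $\gamma^{(j)}\equiv wa^{(j)}\pmod{l}$ in the single coprime coordinate $j$ does not make the \emph{other} coordinates of $\gamma-w\mathbf{a}$ divisible by $l$, so $\tfrac{1}{l}(\gamma-w\mathbf{a})$ need not be a lattice point at all; the same error reappears in your $\prec$-symmetry paragraph, where you assert that every $h\in\Gamma\setminus E$ has a normal form $w\mathbf{a}+lh'$ with $h'\in\mathcal{H}(\Gamma)$ (no such $h'\in\mathbb{N}^d$ need exist). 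For a concrete counterexample, take the paper's own example: $\Gamma=\langle(3,0),(5,0),(0,1),(1,3),(2,3)\rangle$ with extension $E=\langle(6,0),(10,0),(0,2),(2,6),(4,6),(6,9)\rangle$, so $l=2$, $\mathbf{a}=(6,9)$. Any element of $E$ with second coordinate $0$ uses only $(6,0)$ and $(10,0)$, hence has even first coordinate; therefore every point $(2m+1,0)$ lies in $\bigl(\mathrm{Cone}(E)\setminus E\bigr)\cap\mathbb{N}^2$, and $\mathcal{H}(E)$ contains the infinite ascending chain $(1,0)\prec(3,0)\prec(5,0)\prec\cdots$, so it is infinite and has no $\prec$-maximum; in particular $E$ is not $\prec$-symmetric either. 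The structural reason is that $\mathbb{Z}E$ may be a proper finite-index sublattice of $\mathbb{Z}\Gamma$ (here, the points with even first coordinate); coprimality of $l$ with one component of $\mathbf{a}$ rules this out only when $d=1$.

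You should also know that your failure here is not a failure to find the paper's argument: the paper's own proof of this step is erroneous. It claims $\mathcal{H}(E)\subseteq\mathcal{H}(\Gamma)$, but since $E\subseteq\Gamma$ and $\mathrm{Cone}(E)=\mathrm{Cone}(\Gamma)$, the true containment is the one you wrote down, namely $\mathcal{H}(E)=\mathcal{H}(\Gamma)\sqcup(\Gamma\setminus E)\supseteq\mathcal{H}(\Gamma)$, and in the example above $(9,0)\in\mathcal{H}(E)\setminus\mathcal{H}(\Gamma)$. In summary: your proposal correctly proves the MPD statement and the $\mathrm{PF}$ formula by a valid alternative method, while the finiteness and $\prec$-symmetry claims are unprovable because they are false as stated.
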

		\begin{proof}
			Let 	$ \mathbf{F}: 0 \rightarrow R^{\beta_k} \xrightarrow {\phi_k} R^{\beta_{k-1}} \xrightarrow{\phi_{k-1}} \cdots \xrightarrow{\phi_2} R^{\beta_1} \xrightarrow{\phi_0}\rightarrow \mathbb{K}[\Gamma] \rightarrow 0 $
 			be a graded minimal free $R$-resolution of $\mathbb{K}[\Gamma]$. It is clear from the proof of \cite[Lemma 3.4]{Sahin-SIFR} that $\mathcal{B}_i(E)=l\mathcal{B}_i(\Gamma) \cup l[\mathcal{B}_{i-1}(\Gamma)+\mathbf{a}]$ for $1 \leq i \leq \mathrm{pd}(\Gamma)+1$ and $\mathrm{pd}(E)=\mathrm{pd}(\Gamma)+1$. Since $\Gamma$ is MPD-semigroup, therefore $\mathrm{pd}(E)=n-1+1=n$, and so $E$ is MPD semigroup. Now $\mathcal{B}_{n-1}(E)=l\mathcal{B}_{n-1}(\Gamma)\cup [l\mathcal{B}_{n-2}(\Gamma)+\mathbf{a}]=l\mathcal{B}_{n-2}(\Gamma)+l\mathbf{a}$. Therefore, for any $b_{n-2}\in \mathcal{B}_{n-2}(\Gamma)$, we have $lb_{n-2}(\Gamma)+l\mathbf{a}- \sum_{i=1}^{n}l\mathbf{a}_i-\mathbf{a}=lb_{n-2}(\Gamma)-\sum_{i=1}^{n}l\mathbf{a}_i+l\mathbf{a}- \mathbf{a}=lf+(l-1)\mathbf{a}\in \mathrm{PF}(E)$. Hence,
 			$\mathrm{PF}(E)=\{lf+(l-1)\mathbf{a}\,| f \in \mathrm{PF}(\Gamma)\}$. Let $\mathbf{x}\in \mathcal{H}(E)$ then $\mathbf{x}=y_1 \mathbf{a}_1+\dots+y_n\mathbf{a}_n+z\mathbf{a}$, for some $y_i,z\in \mathbb{Q}_{\geq 0}$ but since $\mathbf{a}\in \Gamma$ and one of $y_j$ or $z$ is in $\mathbb{Q}\setminus \mathbb{N}$, we have $\mathbf{x}\in \mathcal{H}(\Gamma)$. Therefore, $\mathcal{H}(E) \subseteq \mathcal{H}(\Gamma)$ is finite. Since $\mathcal{H}(E)$ is finite and $\Gamma$ is $\prec$-symmetric, we have $|\mathrm{PF}(E)|=1 $ and $\mathrm{PF}(E)=\{l\mathrm{F}(\Gamma)_{\prec}+(l-1)\mathbf{a}=\mathrm{F}(E)_{\prec}\}$. Therefore, $E$ is also $\prec$-symmetric.
 		\end{proof}

		\smallskip
		
		\begin{example}[\cite{MPD}]{\rm
			 Let $\Gamma$ be the submonoid of $\mathbb{N}^2$ generated by the columns of the following matrix
			$$A= \begin{pmatrix}
			 	3 &5& 0& 1 &2\\
			 	 0 &0 &1 &3 &3			 	
			 \end{pmatrix}$$
		
		The minimal free resolution $\mathbb{K}[\Gamma]$, as a module over $R=\mathbb{K}[x_1,x_2,x_3,x_4,x_5]$ is given by 
		$$0 \rightarrow R(18,9) \rightarrow R^6 \rightarrow R^{11} \rightarrow R^7 \rightarrow R \rightarrow \mathbb{K}[\Gamma] \rightarrow 0 .$$ 
		In particular, the degrees of minimal generators of the fourth syzygy modules is $(18,9)$. Therefore, $\Gamma$ has one pseudo-Frobenius element $(7,2)$. Consider the extension $E$ of $\Gamma$, which is the submonoid of $\mathbb{N}^2$ generated by the columns of the following matrix
		$$B= \begin{pmatrix}
			6 &10& 0& 2&4 & 6\\
			0 &0 &2 &6 &6	& 9		 	
		\end{pmatrix}$$
			The minimal free resolution $\mathbb{K}[\Gamma]$, as a module over $R=\mathbb{K}[x_1,x_2,x_3,x_4,x_5]$ is given by 
		$$0 \rightarrow R(48,36) \rightarrow R^8 \rightarrow R^{18} \rightarrow R^{17} \rightarrow R^{7} \rightarrow R^{1} \rightarrow \mathbb{K}[E] \rightarrow 0 .$$ 
		The degrees of minimal generators of the fifth syzygy modules is $(48,36)$. Therefore, $E$ has one pseudo-Frobenius element $(48,36)-(28,23)=(20,13)$. Note that $\mathcal{H}(\Gamma)$ is finite, so $\Gamma$ is $\prec$-symmetric and also as $\mathcal{H}(E)\subseteq \mathcal{H}(\Gamma)$, hence $E$ is $\prec$-symmetric.}
		\end{example}
		
		\medspace
		
		\section{Strong Indispensable Minimal Free Resolution}
		Let $\Gamma$ be an affine semigroup, i.e, a finitely generated semigroup which for some $r$ is isomorphic to a subsemigroup of $\mathbb{Z}^r$ containing zero. 
		Suppose that 
		$\Gamma$ is a simplicial affine semigroup, fully embedded in $\mathbb{N}^{d}$, minimally generated by 
		$\{\mathbf{a}_{1}.\dots,\mathbf{a}_{d},\mathbf{a}_{d+1},\dots,\mathbf{a}_{d+r}\}$ with the set 
		of extremal rays $E=\{\mathbf{a}_{1},\dots,\mathbf{a}_{d}\}$. 	The semigroup algebra $\mathbb{K}[\Gamma]$ over a field $\mathbb{K}$ is generated by the 
		monomials $\mathbf{x}^{\mathbf{a}}$, where $\mathbf{a} \in \Gamma$, with maximal ideal 
		$\mathfrak{m}=(\mathbf{x}^{\mathbf{a}_{1}},\dots,\mathbf{x}^{\mathbf{a}_{d+r}})$.
		
		Let $I(\Gamma)$ denote the defining ideal of $\mathbb{K}[\Gamma]$, which is the 
		kernel of the $\mathbb{K}-$algebra homomorphism  
		$\phi:R=\mathbb{K}[z_{1},\dots,z_{d+r}] \rightarrow \mathbb{K}[\mathbf{x}^{\mathbf{a}_{1}},\dots,\mathbf{x}^{\mathbf{a}_{d+r}}]$, 
		such that $\phi(z_{i})=\mathbf{x}^{\mathbf{a}_{i}}$, $i=1,\dots,d+r$. Let 
		us write $\mathbb{K}[\Gamma]\cong A/I(\Gamma)$.  The defining ideal $I(\Gamma)$ 
		is a binomial prime ideal (\cite{Herzog}, Proposition 1.4).

				Let $(\mathbf{F},\phi)$ be a graded minimal free $R$- resolution of $\mathbb{K}[\Gamma]$, where
		
		$$ \mathbf{F}: 0 \rightarrow R^{\beta_k} \xrightarrow {\phi_k} R^{\beta_{k-1}} \xrightarrow{\phi_{k-1}} \cdots \xrightarrow{\phi_2} R^{\beta_1} \xrightarrow{\phi_0}\rightarrow \mathbb{K}[\Gamma] \rightarrow 0. $$
		
		The elements $s_{i,j} \in \Gamma$ for which $R^{\beta_i}$=$\oplus_{j=1}^{\beta_i}R[-s_{i,j}]$ are calledd $i$-Betti $\Gamma$-degrees. Denote by $\mathcal{B}_i(\Gamma)$ the set of these $i$-Betti $\Gamma$-degrees for $1 \leq i \leq \mathrm{pd}(\Gamma)$ and let $\mathcal{B}_i(\Gamma)=0$ otherwise, where $\mathrm{pd}(\Gamma)$ is the projective dimension of $\mathbb{K}[\Gamma]$. Note that we allow $\mathcal{B}_i(\Gamma)$
		to contain repeating elements in a nonstandard way for convenience.	
		
		The resolution $(\mathbf{F},\phi)$ is called \textbf{strongly indispensable} if for any graded minimal resolution $(\mathbf{G},\theta)$, we have an injective complex map $i:(\mathbf{F},\phi) \rightarrow (\mathbf{G},\theta)$. When $(\mathbf{F},\phi)$ is strongly indispensable $\Gamma$ or $\mathbb{K}[\Gamma]$ is said to have a SIFR for short.
		
		\medspace
		
		\subsection{Join of Affine Semigroups}
		
		We start this section with the definition of join of affine semigroups.
		
		\begin{definition}[\cite{Join}]\label{Join}{\rm
				Consider affine semigroups $\Gamma_{1},\Gamma_{2} \subset 
				\mathbb{N}^r$ of dimensions $r_1$ and $r_2$, respectively such 
				that they are 
				minimally generated by 
				the disjoint sets $\mathcal{G}_{1}=\{\mathbf{a}_{1},\dots,\mathbf{a}_{r_{1}},\dots,\mathbf{a}_{r_{1}+n_{1}}\}$ and     $\mathcal{G}_{2}=\{\mathbf{b}_{1},\dots,\mathbf{b}_{r_{2}},\dots,\mathbf{b}_{r_{2}+n_{2}}\}$, with extremal rays $E_{\Gamma_{1}}=\{\mathbf{a}_{1},\dots,\mathbf{a}_{r_{1}}\}$, $E_{\Gamma_{2}}=\{\mathbf{b}_{1},\dots,\mathbf{b}_{r_{2}}\}$ respectively. 
				Assuming that the set $\{\mathbf{a}_{1},\dots,\mathbf{a}_{r_{1}},\mathbf{b}_{1},\dots,\mathbf{b}_{r_{2}}\}$ is linearly independent over $\mathbb{Q}$, the semigroup $\Gamma_{1}\sqcup\Gamma_{2}=\langle\mathcal{G}_{1}\cup\mathcal{G}_{2}\rangle$ is an affine semigroup with the set of  extremal rays $E_{\Gamma_{1}\sqcup\Gamma_{2}}=E_{\Gamma_{1}}\cup E_{\Gamma_{2}}$. Moreover, 
				the set $\mathcal{G}_{1}\cup\mathcal{G}_{2}$ is a minimal generating set of the semigroup 
				$\Gamma_{1}\sqcup\Gamma_{2}$. We call $\Gamma_{1}\sqcup\Gamma_{2}$ the \textit{join} 
				of affine semigroups $\Gamma_{1}$ and $\Gamma_{2}$. Clearly, $\mathrm{dim}(\Gamma_1 \sqcup \Gamma_2)=\mathrm{dim}( \Gamma_1)+\mathrm{dim}( \Gamma_2)$. 
				Let $\Gamma_{1},\dots,\Gamma_{s}$ be affine semigroups in $\mathbb{N}^{r}$, of dimensions $r_1,\dots,r_s$, respectively. 
				If $E_{\Gamma_{1}} \cup \dots \cup E_{\Gamma_{s}}$ is linearly independent set of cardinality $r_1+\dots +r_s=r$, then 
				$\Gamma=\Gamma_1 \sqcup \dots \sqcup \Gamma_s$ is simplicial. Note that a simplicial affine semigroup in $\mathbb{N}^r$ is a join of $r$ semigroups if and only if all elements in its minimal generating set are located on the extremal rays of $\rm{cone}(\Gamma)$. 
				
				In an equivalent statement, one can say that an affine semigroup $\Gamma$ with the minimal generating set $G$ and the set of extremal rays $E$, is a join of two of its sub-semigroups if there exist $G_1,G_2,E_1,E_2$ such that $G=G_1 \cup G_2,\, E=E_1 \cup E_2,\, E_1 \cap E_2=\phi$ and $E_i$ is the set of extremal rays of $\Gamma_i=\langle G_i \rangle$, for $i=1,2$. 
				We call $\Gamma_{1}\sqcup\Gamma_{2}$ the \textit{join} 
				of affine semigroups $\Gamma_{1}$ and $\Gamma_{2}$, denoted by $\Gamma_c$. 
			}
		\end{definition}

		Let $A_{1}=\mathbb{K}[x_{1},\ldots,x_{r_{1}+n_{1}}]$ 
		and $A_{2}=\mathbb{K}[y_{1},\ldots,y_{r_{2}+n_{2}}]$ be polynomial rings 
		with disjoint set of 
		indeterminates $\{x_{1},\ldots , x_{r_{1}+n_{1}}\}$ and $\{y_{1},\dots,y_{r_{2}+n_{2}}\}$. 
		Let $\mathbb{K}[\mathbf{t}]$ be another polynomial ring, where 
		$\mathbf{t}=t_{1},\ldots,t_{r}$. 
		We consider maps $\phi_{\Gamma_{1}}:A_{1}\rightarrow \mathbb{K}[\mathbf{t}]$,  
		defined by $\phi_{\Gamma_{1}}(x_{i})=\mathbf{t}^{\mathbf{a_{i}}}$, $1 \leq i \leq r_{1}+n_{1}$, 
		and 
		$\phi_{\Gamma_{2}}:A_{2}\rightarrow \mathbb{K}[\mathbf{t}]$, defined by 
		$\phi_{\Gamma_{2}}(y_{j})=\mathbf{t}^{\mathbf{b_{j}}}$, $1 \leq j \leq r_{2}+n_{2}$. 
		We write $A_{12}=\mathbb{K}[x_{1},\dots,x_{r_{1}+n_{1}},y_{1},\dots,y_{r_{2}+n_{2}}]$ and 
		consider $\phi_{\Gamma_{1}\sqcup\Gamma_{2}}: A_{12}\rightarrow \mathbb{K}[\mathbf{t}]$, 
		defined by 
		$\phi_{\Gamma_{1}\sqcup\Gamma_{2}}(x_{i})=\mathbf{t}^{\mathbf{a_{i}}}$, 
		$1 \leq i \leq r_{1}+n_{1}$, and 
		$\phi_{\Gamma_{1}\sqcup\Gamma_{2}}(y_{j})=\mathbf{t}^{\mathbf{b_{j}}}$, 
		$1 \leq j \leq r_{2}+n_{2}$. For the sake of simplicity assume $l_1=r_1+n_1$ and $l_2=r_2+n_2$.
		
		\medskip
		
		\begin{theorem}\label{Generator-Concatanation}
			Let $I_{\Gamma_{1}}, I_{\Gamma_{2}}$ be the defining ideals of $\mathbb{K}[\Gamma_{1}]$ and $\mathbb{K}[\Gamma_{2}]$ respectively. Then the defining ideal 
			of $\mathbb{K}[\Gamma_{1}\sqcup\Gamma_{2}]$ is 
			$I_{\Gamma_{1}\sqcup\Gamma_{2}} = I_{\Gamma_{1}}A_{12}+I_{\Gamma_{2}}A_{12}$. The 
			tensor product of minimal graded free resolutions of $\mathbb{K}[\Gamma_{1}]$ and $\mathbb{K}[\Gamma_{2}]$ over $\mathbb{K}$ is a 
			minimal graded free resolution of $\mathbb{K}[\Gamma_{1}\sqcup\Gamma_{2}]$ over $\mathbb{K}$.
		\end{theorem}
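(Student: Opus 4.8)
The plan is to identify $\mathbb{K}[\Gamma_1\sqcup\Gamma_2]$ with the $\mathbb{K}$-algebra tensor product $\mathbb{K}[\Gamma_1]\otimes_{\mathbb{K}}\mathbb{K}[\Gamma_2]$ and then read off both assertions from the standard behaviour of defining ideals and minimal resolutions under $\mathbb{K}$-tensor products. The crucial structural input is that every element of $\Gamma_1\sqcup\Gamma_2$ decomposes \emph{uniquely} as $\gamma_1+\gamma_2$ with $\gamma_i\in\Gamma_i$. First I would record that $\Gamma_i\subseteq\mathrm{Cone}(\Gamma_i)$ lies in the $\mathbb{Q}$-span $V_i$ of its extremal rays $E_{\Gamma_i}$, and that the hypothesis that $E_{\Gamma_1}\cup E_{\Gamma_2}$ is linearly independent over $\mathbb{Q}$ forces $V_1\cap V_2=\{0\}$. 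Hence if $\gamma_1+\gamma_2=\gamma_1'+\gamma_2'$ then $\gamma_1-\gamma_1'=\gamma_2'-\gamma_2\in V_1\cap V_2=\{0\}$, which gives uniqueness; in particular $\Gamma_1\sqcup\Gamma_2=\Gamma_1+\Gamma_2$ is a direct sum of monoids.

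With this in hand, the $\mathbb{K}$-linear map sending $\mathbf{t}^{\gamma_1}\otimes\mathbf{t}^{\gamma_2}\mapsto\mathbf{t}^{\gamma_1+\gamma_2}$ carries the monomial $\mathbb{K}$-basis of $\mathbb{K}[\Gamma_1]\otimes_{\mathbb{K}}\mathbb{K}[\Gamma_2]$ bijectively onto that of $\mathbb{K}[\Gamma_1\sqcup\Gamma_2]$ (bijectivity being exactly the uniqueness above), and it is clearly multiplicative, so it is a $\mathbb{K}$-algebra isomorphism $\mathbb{K}[\Gamma_1]\otimes_{\mathbb{K}}\mathbb{K}[\Gamma_2]\cong\mathbb{K}[\Gamma_1\sqcup\Gamma_2]$. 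Since $A_{12}=A_1\otimes_{\mathbb{K}}A_2$ and $\phi_{\Gamma_1\sqcup\Gamma_2}$ is identified with $\phi_{\Gamma_1}\otimes\phi_{\Gamma_2}$ under this isomorphism, the ideal statement reduces to the standard computation of the kernel of a tensor product of surjections of $\mathbb{K}$-algebras: for surjections with kernels $I_{\Gamma_1},I_{\Gamma_2}$ one has $\ker(\phi_{\Gamma_1}\otimes\phi_{\Gamma_2})=I_{\Gamma_1}\otimes_{\mathbb{K}}A_2+A_1\otimes_{\mathbb{K}}I_{\Gamma_2}$, which is precisely $I_{\Gamma_1}A_{12}+I_{\Gamma_2}A_{12}$ (flatness of $\mathbb{K}$ makes every relevant tensoring exact). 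This settles the first assertion.

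For the resolution statement, I would take minimal graded free resolutions $(\mathbf{F}_1,\partial_1)$ and $(\mathbf{F}_2,\partial_2)$ of $\mathbb{K}[\Gamma_1]$ over $A_1$ and of $\mathbb{K}[\Gamma_2]$ over $A_2$, and form the total tensor complex $\mathbf{F}_1\otimes_{\mathbb{K}}\mathbf{F}_2$. Each term $A_1^{a}\otimes_{\mathbb{K}}A_2^{b}$ is canonically $A_{12}^{ab}$, so this is a complex of free $A_{12}$-modules, and the multigrading by $\Gamma_1\sqcup\Gamma_2$-degree is respected throughout. Because $\mathbb{K}$ is a field, the K\"unneth formula gives $H_n(\mathbf{F}_1\otimes_{\mathbb{K}}\mathbf{F}_2)=\bigoplus_{i+j=n}H_i(\mathbf{F}_1)\otimes_{\mathbb{K}}H_j(\mathbf{F}_2)$ with no $\mathrm{Tor}$ corrections; as $\mathbf{F}_1,\mathbf{F}_2$ are resolutions their homologies are concentrated in degree $0$, equal to $\mathbb{K}[\Gamma_1]$ and $\mathbb{K}[\Gamma_2]$, so the tensor complex is exact in positive degrees with $H_0=\mathbb{K}[\Gamma_1]\otimes_{\mathbb{K}}\mathbb{K}[\Gamma_2]\cong\mathbb{K}[\Gamma_1\sqcup\Gamma_2]$. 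Thus $\mathbf{F}_1\otimes_{\mathbb{K}}\mathbf{F}_2$ is a graded free $A_{12}$-resolution of $\mathbb{K}[\Gamma_1\sqcup\Gamma_2]$.

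It remains to check minimality, the only place where one must inspect the differentials rather than just the homology. The Leibniz differential on the tensor complex is $\partial_1\otimes\mathrm{id}\pm\mathrm{id}\otimes\partial_2$, so every entry of its matrix is an entry of $\partial_1$ or of $\partial_2$. By minimality of $\mathbf{F}_1$ and $\mathbf{F}_2$ these entries lie in the homogeneous maximal ideals $\mathfrak{m}_1\subset A_1$ and $\mathfrak{m}_2\subset A_2$, hence in $\mathfrak{m}_{12}=(x_1,\dots,x_{l_1},y_1,\dots,y_{l_2})$; therefore the tensor resolution has no unit entries and is minimal. I expect the main conceptual obstacle to be isolating the unique-decomposition property and verifying that it genuinely yields the algebra isomorphism $\mathbb{K}[\Gamma_1\sqcup\Gamma_2]\cong\mathbb{K}[\Gamma_1]\otimes_{\mathbb{K}}\mathbb{K}[\Gamma_2]$; once that splitting is secured, the remainder is the routine K\"unneth-plus-minimality package for tensor products of complexes over a field.
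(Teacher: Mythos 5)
Your proposal is correct, but a comparison with ``the paper's proof'' is somewhat degenerate here: the paper does not prove Theorem~\ref{Generator-Concatanation} at all, it simply cites \cite[Theorem 4.2]{Join}. What you have written is essentially the natural self-contained argument that the citation hides, and all of its steps check out. The load-bearing observation --- that $\mathbb{Q}$-linear independence of $E_{\Gamma_1}\cup E_{\Gamma_2}$ forces $V_1\cap V_2=\{0\}$, hence unique decomposition $\gamma=\gamma_1+\gamma_2$ and a monoid direct sum $\Gamma_1\sqcup\Gamma_2\cong\Gamma_1\oplus\Gamma_2$ --- is legitimate because the join's semigroups are simplicial by construction (each $\Gamma_i$ has exactly $r_i=\dim\Gamma_i$ extremal rays, so $\Gamma_i\subseteq\mathrm{Cone}(\Gamma_i)$ lies in the $\mathbb{Q}$-span of $E_{\Gamma_i}$); it is in fact the same structural fact the paper itself re-derives as ``claim (i)'' inside its proof of Theorem~\ref{Join-SIFR}, so your proof makes explicit a mechanism the paper uses anyway. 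From the algebra isomorphism $\mathbb{K}[\Gamma_1]\otimes_{\mathbb{K}}\mathbb{K}[\Gamma_2]\cong\mathbb{K}[\Gamma_1\sqcup\Gamma_2]$, your identification of the kernel of $\phi_{\Gamma_1}\otimes\phi_{\Gamma_2}$ with $I_{\Gamma_1}A_{12}+I_{\Gamma_2}A_{12}$, the K\"unneth computation showing the total tensor complex is a resolution, and the minimality check (every matrix entry of $\partial_1\otimes\mathrm{id}\pm\mathrm{id}\otimes\partial_2$ lies in $\mathfrak{m}_1$ or $\mathfrak{m}_2$, hence in $\mathfrak{m}_{12}$) are all standard and correctly executed. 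The one point worth stating more carefully in a final write-up is the multigrading bookkeeping: a basis element of $F_{p_1}\otimes F_{p_2}$ sitting in $\Gamma_1$-degree $s$ and $\Gamma_2$-degree $s'$ has $\Gamma_1\sqcup\Gamma_2$-degree $s+s'$, and it is exactly the unique-decomposition property that makes this a well-defined $\Gamma_1\sqcup\Gamma_2$-grading rather than merely a coarser one; this is also what later justifies the identity $\mathcal{B}_i(\Gamma)=\bigcup_{p_1+p_2=i}\bigl(\mathcal{B}_{p_1}(\Gamma_1)+\mathcal{B}_{p_2}(\Gamma_2)\bigr)$ used in Theorem~\ref{Join-SIFR}.
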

		
		\begin{proof}
			See \cite[Theorem 4.2]{Join}.
		\end{proof}
		
		\begin{lemma}\label{Cond-SIFR}
		A minimal graded free resolution of $\mathbb{K}[\Gamma]$ is strongly indispensable if and only if $\pm (\mathbf{b}_i-\mathbf{b}')\notin \Gamma $ for all $\mathbf{b}_i,\mathbf{b}' \in \mathcal{B}_i(\Gamma)$ and for each $1 \leq i \leq \mathrm{pd}(\Gamma)$.
		\begin{proof}
			See \cite[Lemma 2.1]{Sahin-SIFR}.
		\end{proof}

		\end{lemma}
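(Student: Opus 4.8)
The plan is to reduce everything to a degree-counting analysis of graded comparison maps between minimal resolutions, using that the multiset of Betti $\Gamma$-degrees $\mathcal{B}_i(\Gamma)$ is an invariant of $\mathbb{K}[\Gamma]$ and is therefore shared by any two minimal graded free resolutions. First I would record the basic constraint coming from the $\Gamma$-grading: a degree-zero graded homomorphism $R(-\mathbf{b}')\to R(-\mathbf{b})$ is multiplication by a homogeneous element of $R$ of $\Gamma$-degree $\mathbf{b}'-\mathbf{b}$, hence is zero unless $\mathbf{b}'-\mathbf{b}\in\Gamma$, is a nonzero scalar exactly when $\mathbf{b}=\mathbf{b}'$, and lies in $\mathfrak{m}$ precisely when $\mathbf{b}'-\mathbf{b}\in\Gamma\setminus\{\mathbf{0}\}$. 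Consequently, for a degree-preserving complex map $\iota\colon\mathbf{F}\to\mathbf{G}$, the block $\iota_i\colon F_i\to G_i$ is represented by a matrix whose entry from the degree-$\mathbf{b}'$ generator of $F_i$ to the degree-$\mathbf{b}$ generator of $G_i$ can be nonzero only when $\mathbf{b}'-\mathbf{b}\in\Gamma$.

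For the direction ($\Leftarrow$), suppose $\pm(\mathbf{b}_i-\mathbf{b}')\notin\Gamma$ for all $\mathbf{b}_i,\mathbf{b}'\in\mathcal{B}_i(\Gamma)$ and every $i$. Since $\mathbf{0}\in\Gamma$, this already forces $\mathcal{B}_i(\Gamma)$ to be multiplicity-free, and it forbids any nonzero difference of Betti degrees from lying in $\Gamma$. By the constraint of the first paragraph, every block $\iota_i$ of an admissible comparison map is then \emph{diagonal with scalar entries}: the degree-$\mathbf{b}$ generator can only be sent to a scalar multiple of the unique generator of $G_i$ of the same degree, off-diagonal entries being excluded by the antichain condition. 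Choosing a comparison map lifting $\mathrm{id}_{\mathbb{K}[\Gamma]}$ and reducing modulo $\mathfrak{m}$, minimality forces these scalars to be units, so the map is a monomial injective complex map; as this works uniformly for every minimal $\mathbf{G}$, the resolution $\mathbf{F}$ is strongly indispensable.

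For the converse I argue contrapositively. Suppose the condition fails at some level $i$, so there exist $\mathbf{b},\mathbf{b}'\in\mathcal{B}_i(\Gamma)$ with $\mathbf{b}-\mathbf{b}'\in\Gamma$; fix a monomial $m\in\mathfrak{m}$ of $\Gamma$-degree $\mathbf{b}-\mathbf{b}'$ (the case $\mathbf{b}=\mathbf{b}'$ being a repeated degree handled directly). The idea is to build a second minimal graded free resolution $\mathbf{G}$ from $(\mathbf{F},\phi)$ by the elementary graded shear of $F_i$ sending the basis vector $e_{\mathbf{b}}$ to $e_{\mathbf{b}}+m\,e_{\mathbf{b}'}$ and compensating in $\phi_{i+1}$ and $\phi_i$ so that the result is again a complex; this is degree preserving precisely because $\deg(m\,e_{\mathbf{b}'})=\mathbf{b}$. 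The resulting $\mathbf{G}$ is again a minimal graded free resolution of $\mathbb{K}[\Gamma]$ with the same Betti degrees, but whose $i$-th differential genuinely involves the monomial $m$, and one then checks that $\mathbf{F}$ admits no monomial (generator-to-generator) injective complex map into this $\mathbf{G}$, witnessing the failure of strong indispensability.

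The hard part will be this converse construction: one must exhibit the modified resolution $\mathbf{G}$ and verify both that it remains an exact minimal complex resolving $\mathbb{K}[\Gamma]$ and that the shear obstructs every admissible injective comparison map. The delicate step is propagating the elementary change of basis on $F_i$ through the adjacent differentials $\phi_{i+1}$ and $\phi_i$ while keeping all entries in $\mathfrak{m}$ (to preserve minimality) and preserving exactness; once $\mathbf{G}$ is in hand, the degree constraints of the first paragraph pin down exactly which comparison maps survive and force the desired non-injectivity.
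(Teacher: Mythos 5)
Your opening degree analysis and the direction ($\Leftarrow$) are essentially sound: the constraint that a degree-zero graded map $R(-\mathbf{b}')\to R(-\mathbf{b})$ is multiplication by an element of $\Gamma$-degree $\mathbf{b}'-\mathbf{b}$, the remark that $\mathbf{0}\in\Gamma$ forces multiplicity-free Betti degrees, and the conclusion that every lift of the identity between minimal resolutions is then diagonal with unit entries is the standard argument for that half. (For calibration: the paper proves nothing here; it simply cites \cite[Lemma 2.1]{Sahin-SIFR}, which in turn rests on Charalambous--Thoma \cite{SIFR}, so any genuine proof is ``a different route'' by default.)

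The converse, however, has a real gap, and it sits exactly where the definition recorded in this paper is too weak to carry the lemma. Your $\mathbf{G}$ is obtained from $(\mathbf{F},\phi)$ by the homogeneous change of basis $e_{\mathbf{b}}\mapsto e_{\mathbf{b}}+m\,e_{\mathbf{b}'}$; the clean way to ``compensate in $\phi_i$ and $\phi_{i+1}$'' is to observe that the underlying complex of modules is literally unchanged, so exactness and minimality are automatic and there is no hard part there. But precisely for that reason $\mathbf{G}\cong\mathbf{F}$ as graded complexes, so an injective complex map $\mathbf{F}\to\mathbf{G}$ certainly exists (the change-of-basis isomorphism, or any lift of the identity), and your $\mathbf{G}$ witnesses nothing. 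Your escape, demanding that the embedding be ``monomial (generator-to-generator)'', is a restriction that appears neither in the definition you were given nor anywhere in your argument; it is exactly the extra based structure present in Charalambous--Thoma's actual definition of strong indispensability. Indeed, with the paper's literal definition the lemma is \emph{false} and the converse is unprovable: any morphism of complexes between two minimal graded free resolutions of the same module which lifts the identity is an isomorphism, so an injective complex map into every minimal $\mathbf{G}$ always exists, while the Betti-degree condition can certainly fail. A correct proof must first repair the definition, e.g.\ take strong indispensability to mean that the minimal generating set of each syzygy module $\mathrm{im}\,\phi_i$ is unique up to constant multiples (equivalently, every comparison map lifting the identity is, in each homological degree, a monomial matrix with respect to the distinguished homogeneous bases). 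With that formulation your shear does finish the job: $\{\phi_i(e_{\mathbf{c}})\}_{\mathbf{c}}$ and the sheared set are two minimal generating sets of $\mathrm{im}\,\phi_i$, and $\phi_i(e_{\mathbf{b}})+m\,\phi_i(e_{\mathbf{b}'})$ is a constant multiple of no $\phi_i(e_{\mathbf{c}})$ (degree reasons force $\mathbf{c}=\mathbf{b}$; then either $\phi_i(e_{\mathbf{b}})\in\mathfrak{m}\,\mathrm{im}\,\phi_i$, contradicting minimality, or $m\,\phi_i(e_{\mathbf{b}'})=0$, impossible in a domain). So your construction is the right one, but as written it is aimed at a definition under which it cannot succeed.
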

		
		\begin{theorem}\label{Join-SIFR}
			Let $\Gamma_1$ and $\Gamma_2$ are two affine semigroups in $\mathbb{N}^r$. Let $\Gamma=\Gamma_1 \sqcup \Gamma_2$ be a join of affine semigroups $\Gamma_{1},\Gamma_{2}$. Then $\Gamma$ has a SIFR if and only if  $\Gamma_1$ and $\Gamma_2$ have SIFRs. 
		\end{theorem}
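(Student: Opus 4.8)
The plan is to translate the SIFR property, on both sides, into the purely combinatorial criterion of Lemma \ref{Cond-SIFR}, and then to compare the Betti $\Gamma$-degrees of $\Gamma=\Gamma_1\sqcup\Gamma_2$ with those of the factors using Theorem \ref{Generator-Concatanation}. Since the minimal graded free resolution of $\mathbb{K}[\Gamma]$ is the tensor product over $\mathbb{K}$ of the minimal resolutions $(\mathbf{F},\phi)$ of $\mathbb{K}[\Gamma_1]$ and $(\mathbf{G},\theta)$ of $\mathbb{K}[\Gamma_2]$, its $i$-th free module is $\bigoplus_{p+q=i}F_p\otimes_{\mathbb{K}}G_q$, and reading off $\Gamma$-degrees yields
\[
\mathcal{B}_i(\Gamma) = \bigcup_{p+q=i}\big\{\, \mathbf{b}+\mathbf{b}' : \mathbf{b}\in\mathcal{B}_p(\Gamma_1),\ \mathbf{b}'\in\mathcal{B}_q(\Gamma_2)\,\big\},
\]
with the convention $\mathcal{B}_0(\Gamma_1)=\mathcal{B}_0(\Gamma_2)=\{\mathbf{0}\}$. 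Throughout I would use the direct-sum geometry of the join: $\Gamma_1$ lies in the $\mathbb{Q}$-span $V_1$ of $E_{\Gamma_1}$ and $\Gamma_2$ in the span $V_2$ of $E_{\Gamma_2}$, and $V_1\cap V_2=\{\mathbf{0}\}$ because $E_{\Gamma_1}\cup E_{\Gamma_2}$ is linearly independent. Hence the $V_1$- and $V_2$-components of any $\mathbf{c}\in\Gamma$ are uniquely determined and lie in $\Gamma_1$ and $\Gamma_2$ respectively.

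For the forward implication I would argue by contrapositive. If $\Gamma_1$ fails Lemma \ref{Cond-SIFR}, there are distinct $\mathbf{b},\mathbf{u}\in\mathcal{B}_p(\Gamma_1)$ with $\mathbf{b}-\mathbf{u}\in\Gamma_1$. Taking $q=0$ in the displayed formula exhibits $\mathbf{b}$ and $\mathbf{u}$ as distinct elements of $\mathcal{B}_p(\Gamma)$, and $\mathbf{b}-\mathbf{u}\in\Gamma_1\subseteq\Gamma$, so $\Gamma$ violates Lemma \ref{Cond-SIFR}; the symmetric argument with $p=0$ treats $\Gamma_2$. Thus if $\Gamma$ has a SIFR, so do $\Gamma_1$ and $\Gamma_2$.

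For the converse, assume both factors have SIFRs, take distinct $\mathbf{c},\mathbf{c}'\in\mathcal{B}_i(\Gamma)$, and suppose toward a contradiction that $\mathbf{c}-\mathbf{c}'\in\Gamma$ (the other sign is symmetric). Writing $\mathbf{c}=\mathbf{b}+\mathbf{b}'$, $\mathbf{c}'=\mathbf{u}+\mathbf{u}'$ with $\mathbf{b}\in\mathcal{B}_p(\Gamma_1)$, $\mathbf{u}\in\mathcal{B}_{p'}(\Gamma_1)$, $\mathbf{b}'\in\mathcal{B}_q(\Gamma_2)$, $\mathbf{u}'\in\mathcal{B}_{q'}(\Gamma_2)$ and $p+q=p'+q'=i$, I compare $V_1$- and $V_2$-components of $\mathbf{c}-\mathbf{c}'$; since these components lie in $\Gamma_1$ and $\Gamma_2$, I obtain simultaneously $\mathbf{b}-\mathbf{u}\in\Gamma_1$ and $\mathbf{b}'-\mathbf{u}'\in\Gamma_2$. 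If $p=p'$ (hence $q=q'$), then $\mathbf{b},\mathbf{u}\in\mathcal{B}_p(\Gamma_1)$ with $\mathbf{b}-\mathbf{u}\in\Gamma_1$ forces $\mathbf{b}=\mathbf{u}$ by Lemma \ref{Cond-SIFR} for $\Gamma_1$, and likewise $\mathbf{b}'=\mathbf{u}'$, so $\mathbf{c}=\mathbf{c}'$, a contradiction.

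The case $p\neq p'$ is the main obstacle, and here I would invoke the strict monotonicity of $\Gamma$-degrees along a minimal resolution. Say $p>p'$, so $q<q'$. By minimality the differential of $(\mathbf{G},\theta)$ sends each basis element to a nonzero combination of lower basis elements with coefficients in $\mathfrak{m}$ (a basis element cannot map to $0$, else it would lie in $\mathfrak{m}\,G_{q'}$); descending from level $q'$ down to level $q$, each step strictly decreases the $\Gamma$-degree, producing $\mathbf{t}\in\mathcal{B}_q(\Gamma_2)$ with $\mathbf{u}'-\mathbf{t}\in\Gamma_2\setminus\{\mathbf{0}\}$. Combined with $\mathbf{b}'-\mathbf{u}'\in\Gamma_2$, this gives $\mathbf{b}'-\mathbf{t}\in\Gamma_2\setminus\{\mathbf{0}\}$ with $\mathbf{b}',\mathbf{t}$ \emph{distinct} elements of $\mathcal{B}_q(\Gamma_2)$, contradicting Lemma \ref{Cond-SIFR} for $\Gamma_2$. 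Hence no such $\mathbf{c},\mathbf{c}'$ exist and $\Gamma$ has a SIFR. The points needing care are the nonvanishing-of-differentials step (which is exactly minimality) and verifying that the manufactured elements sit at the \emph{same} homological level so that the criterion of Lemma \ref{Cond-SIFR} genuinely applies.
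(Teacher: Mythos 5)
Your proof is correct, and its skeleton is the same as the paper's: translate SIFR on both sides via Lemma \ref{Cond-SIFR}, use Theorem \ref{Generator-Concatanation} to identify the minimal resolution of $\mathbb{K}[\Gamma]$ with the tensor product of those of the factors, read off $\mathcal{B}_i(\Gamma)=\bigcup_{p+q=i}\bigl(\mathcal{B}_p(\Gamma_1)+\mathcal{B}_q(\Gamma_2)\bigr)$ (your Minkowski-sum formulation is the correct reading; the paper's display writes $\cup$ where $+$ is meant), and split any element of $\Gamma$ into its $\Gamma_1$- and $\Gamma_2$-components using linear independence of the extremal rays. The genuine difference is your treatment of the mismatched-level case $p\neq p'$. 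The paper writes $\mathbf{b}=\mathbf{b}_{p,1}+\mathbf{b}_{q,2}$, $\mathbf{b}'=\mathbf{b}'_{r,1}+\mathbf{b}'_{s,2}$ with $p+q=r+s=i$, deduces $\mathbf{b}_{p,1}-\mathbf{b}'_{r,1}\in\Gamma_1$ and $\mathbf{b}_{q,2}-\mathbf{b}'_{s,2}\in\Gamma_2$ exactly as you do, and then concludes that this ``is not possible as $\Gamma_1$ and $\Gamma_2$ have SIFRs.'' But Lemma \ref{Cond-SIFR} only forbids differences of Betti degrees \emph{at the same homological level}, so when $p\neq r$ that conclusion does not follow as stated; indeed a Betti degree at a higher level typically does dominate one at a lower level (your own descent argument shows this always happens in a minimal resolution). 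Your extra step --- minimality forces each differential to be nonzero on basis elements with entries in $\mathfrak{m}$, so from $\mathbf{u}'\in\mathcal{B}_{q'}(\Gamma_2)$ one descends to some $\mathbf{t}\in\mathcal{B}_q(\Gamma_2)$ with $\mathbf{u}'-\mathbf{t}\in\Gamma_2\setminus\{\mathbf{0}\}$, whence $\mathbf{b}'-\mathbf{t}\in\Gamma_2\setminus\{\mathbf{0}\}$ violates the criterion at the single level $q$ --- is precisely what is needed to close this case, so your write-up is more complete than the paper's own argument at its one delicate point. One cosmetic remark: when $q=0$ the contradiction is not literally with Lemma \ref{Cond-SIFR} (whose range is $1\leq i\leq \mathrm{pd}$), since then $\mathbf{b}'=\mathbf{t}=\mathbf{0}$; rather, your conclusion that $\mathbf{b}'-\mathbf{t}$ is nonzero is itself the contradiction (equivalently, pointedness of $\Gamma_2\subseteq\mathbb{N}^r$ rules out $\mathbf{b}'-\mathbf{u}'\in\Gamma_2$ with $\mathbf{u}'\neq\mathbf{0}$), so the case is vacuous either way.
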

		\begin{proof}
			Let $\mathbf{F}_i$ be a graded minimal free resolution of $\mathbb{K}[{\Gamma_i}]$ for $i=1,2$. By Theorem \ref{Generator-Concatanation}, $\mathbf{F}_1\otimes \mathbf{F}_2$ gives the graded minimal free resolution of $\mathbb{K}[{\Gamma}]$. Hence, the proof follows from the following
			$$ 	\big(\mathbf{F}_1 \otimes \mathbf{F}_2 \big)_i= \bigoplus_{p_1+p_2=i} F_{p_1} \otimes F_{p_2}			$$
			since $\Gamma$-degrees of elements in $F_{p_1} \otimes F_{p_2}$ constitutes the set $\mathcal{B}_{p_1}(\Gamma_{1}) +\mathcal{B}_{p_2}(\Gamma_2)$.  Hence, we have
			$\mathcal{B}_i(\Gamma)=\bigcup_{p_1+p_2=i}\big(\mathcal{B}_{p_1}(\Gamma_1) \cup  \mathcal{B}_{p_2}(\Gamma_{2})\big)$.
			\medspace
			
			\textbf{claim(i)}: Fix $j \in \{1,2\}$, and $ \mathbf{b},\mathbf{b'} \in \Gamma_j$. Then $ \mathbf{b}-\mathbf{b'} \in \Gamma $ if and only if $\mathbf{b}-\mathbf{b'} \in \Gamma_1 $ or $\mathbf{b}-\mathbf{b'} \in \Gamma_2 $.\\
			Without loss of generality assume that $j=1$. As $\Gamma_1 \subseteq \Gamma$, $\mathbf{b}-\mathbf{b'} \in \Gamma_1 $ implies $\mathbf{b}-\mathbf{b'} \in \Gamma $. Conversely, take $\mathbf{b},\mathbf{b'} \in \Gamma_1 $ such that $\mathbf{b}-\mathbf{b'} \in \Gamma $. Then $\mathbf{b}-\mathbf{b'} = \gamma_1 +\gamma_2 $ for some $\gamma_1 \in \Gamma_{1}$ and $\gamma_2 \in \Gamma_{2}$.
			Assume that $\gamma_2 \neq 0$. Since $E_1$ and $E_2$ are the set of extremal rays of $\Gamma_1$ and $\Gamma_2$ respectively, hence $\mathbf{b},\, \gamma_1,\,\mathbf{b'},\, \gamma_2$ can be written as rational linear combination of elements of $E_1$ and $E_2$. But in that case we get a non-zero linear combination of elements of $E_1$ and $E_2$ over $\mathbb{Q}$, which is a contradiction of the fact that $\{\mathbf{a}_{1},\dots,\mathbf{a}_{r_{1}},\mathbf{b}_{1},\dots,\mathbf{b}_{r_{2}}\}$ is linearly independent over $\mathbb{Q}$. Therefore $\mathbf{b}-\mathbf{b'} \in \Gamma_1$.

			By Lemma \ref{Cond-SIFR}, the differences between the elements in $\mathcal{B}_i(\Gamma)$ do not belong to $\Gamma$. Let $\mathbf{b}_{i,j}, \mathbf{b}^{'}_{i,j} \in \mathcal{B}_{i}(\Gamma_j)$, for $j=1,2$. Since $\mathcal{B}_i(\Gamma_j) \subset \mathcal{B}_i(\Gamma)$, and
			$\mathbf{b}_{i,j}- \mathbf{b}^{'}_{i,j} \notin \Gamma$. Hence by claim (i) $\mathbf{b}_{i,j}-\mathbf{b}^{'}_{i,j} \notin \Gamma_j$, which means $I_{\Gamma_1}$ and $I_{\Gamma_2}$ have SIFR by Lemma \ref{Cond-SIFR}. Now, if $ \mathbf{b},\mathbf{b}^{'} \in \mathcal{B}_i(\Gamma)$ then $\mathbf{b},\mathbf{b}^{'} \in \mathcal{B}_p(\Gamma_1)+\mathcal{B}_q(\Gamma_2)$, for $p+q=i$. Let $\mathbf{b}=\mathbf{b}_{p,1}+\mathbf{b}_{q,2}$ and $\mathbf{b}^{'}=\mathbf{b}^{'}_{r,1}+\mathbf{b}^{'}_{s,2}$
			with $p+q=i=r+s$. Suppose $\mathbf{b}-\mathbf{b}^{'} \in \Gamma$. Then, $\mathbf{b}-\mathbf{b}^{'}=	\mathbf{b}_{p,1}+\mathbf{b}_{q,2}-	\mathbf{b}^{'}_{r,1}-\mathbf{b}^{'}_{s,2} =\gamma_1+\gamma_2$, for some $\gamma_1 \in \Gamma_{1}$ and $\gamma_2 \in \Gamma_{2}$. Thus, $\mathbf{b}_{p,1}-\mathbf{b}'_{r,1}-\gamma_1=\mathbf{b}_{q,2}-\mathbf{b}'_{s,2}-\gamma_2$, which leads to the contradiction of the fact $\{\mathbf{a}_{1},\dots,\mathbf{a}_{r_{1}},\mathbf{b}_{1},\dots,\mathbf{b}_{r_{2}}\}$ is linearly independent over $\mathbb{Q}$, unless $\mathbf{b}_{p,1}-\mathbf{b}'_{r,1}-\gamma_1=\mathbf{b}_{q,2}-\mathbf{b}'_{s,2}-\gamma_2=0$. Hence, $\mathbf{b}_{p,1}-\mathbf{b}'_{r,1} \in \Gamma_1$ and $\mathbf{b}_{q,2}-\mathbf{b}'_{s,2}\in \Gamma_2$, which is not possible as $\Gamma_1$ and $\Gamma_2$ have SIFRs. Hence, $\mathbf{b}-\mathbf{b}^{'} \notin \Gamma$, and by Lemma \ref{Cond-SIFR}, $\Gamma$ has a strongly indispensable free resolution.
			\end{proof}
	
		\medspace

		\bibliographystyle{amsalpha}

\begin{thebibliography}{A}
		
		\bibitem{Arslan}
		Arslan, Feza; Mete, Pinar; \c{S}ahin, Mesut.
		Gluing and Hilbert functions of monomial curves.
		Proc. Amer. Math. Soc.137(2009), no.7, 2225–2232.
		
		\bibitem{Arslan-2}
		Arslan, Feza; Mete, Pinar.
		Hilbert functions of Gorenstein monomial curves. Proc. Amer. Math. Soc.135(2007), no.7, 1993–2002.
		
		\bibitem{Pseudo}
		Bhardwaj, Om Prakash; Goel, Kriti; Sengupta, Indranath. 
		Affine semigroups of maximal projective dimension.(English summary)Collect. Math.74(2023), no.3, 703–727.
				
		\bibitem{SIFR}
		Charalambous, Hara; Thoma, Apostolos.
		On the generalized Scarf complex of lattice ideals. J. Algebra 323(2010), no.5, 1197–1211.
		
		\bibitem{MPD}
		Garc\'{i}a-Garc\'{i}a, J. I.; Ojeda, I.; Rosales, J. C.; Vigneron-Tenorio, A.
		On pseudo-Frobenius elements of submonoids of $\mathbb{N}^d$. Collect. Math.71(2020), no.1, 189–204.
		
		
		\bibitem{Macaulay}
		Grayson, D.R.; Stillman, M.E. Macaulay 2, a software system for research in
		algebraic geometry. Available at http://www.math.uiuc.edu/Macaulay2.
		
		\bibitem{SMR}
		Gimenez, Philippe; Srinivasan, Hema.
		The structure of the minimal free resolution of semigroup rings obtained by gluing. J. Pure Appl. Algebra 223(2019), no.4, 1411–1426.		
		
		\bibitem{Herzog}
		Herzog, J\"{u}rgen. Generators and relations of abelian semigroups and semigroup rings. Manuscripta
		Math.3 (1970), 175-193.
		
		\bibitem{CMC}
		Herzog, J\"{u}rgen; Stamate, Dumitru I.
		Cohen-Macaulay criteria for projective monomial curves via Gr\"{o}bner bases. Acta Math. Vietnam.44(2019), no.1, 51–64.
		
		
		
		
				
		\bibitem{Kunz}
		Kunz, Ernst. The value-semigroup of a one-dimensional Gorenstein ring. Proc. Amer. Math. Soc.25(1970), 748–751.
		
		\bibitem{ACM}
		Patil, D. P.; Roberts, L. G. 
		Hilbert functions of monomial curves. J. Pure Appl. Algebra183(2003), no.1-3, 275–292.
		
		\bibitem{Rosales}
		Rosales, J. C.	On presentations of subsemigroups of $\mathbb{N}^n$. Semigroup Forum55(1997), no.2, 152–159.
		
		

		
		
				
		\bibitem{Join}
		 Saha, Joydip; Sengupta, Indranath; Srivastava, Pranjal. Join of affine semigroups. Communications in Algebra(2023), DOI: 10.1080/00927872.2023.2266836.
				
		\bibitem{Saha-Gluing}
		Saha, Joydip; Sengupta, Indranath; Srivastava, Pranjal.
		Betti sequence of the projective closure of affine monomial curves. J. Symbolic Comput.119(2023), 101–111.
		
		
		

	
	
		
		\bibitem{Sahin-SIFR}
		\c{S}ahin, Mesut; Stella, Leah Gold.
		Gluing semigroups and strongly indispensable free resolutions. Internat. J. Algebra Comput.29(2019), no.2, 263–278.
		
		\bibitem{Extension}
		\c{Ş}ahin, Mesut.
		Extensions of toric varieties. Electron. J. Combin.18(2011), no.1, Paper 93, 10 pp.
		
		
				
		\bibitem{Betti}
		Stamate, Dumitru I.
		Betti numbers for numerical semigroup rings. Multigraded algebra and applications, 133–157.
		Springer Proc. Math. Stat., 238
		
		
		\bibitem{Watanabe}
		Watanabe, Keiichi. Some examples of one dimensional Gorenstein domains. Nagoya Math. J.49(1973), 101–109.
	\end{thebibliography}

	\end{document}